\newtheorem{theorem}{Theorem}
\newtheorem{definition}[theorem]{Definition}
\newtheorem{example}[theorem]{Example}
\newtheorem{proposition}[theorem]{Proposition}
\newtheorem{remark}[theorem]{Remark}
\newenvironment{proof}[1][Proof]{\noindent\textbf{#1.} }{\ \rule{0.5em}{0.5em}}
\begin{document}

\title{Outer invariance entropy for discrete-time linear systems on Lie groups}
\author{Fritz Colonius\\Institut f\"{u}r Mathematik, Universit\"{a}t Augsburg, Augsburg, Germany
\and Jo\~{a}o A. N. Cossich\thanks{Partly supported by Proyecto FONDECYT No.
3200819} and Alexandre J. Santana\\Departamento de Matem\'{a}tica, Universidade Estadual de Maring\'{a}\\Maring\'{a}, Brazil}
\maketitle

\begin{abstract}
We introduce discrete-time linear control systems on connected Lie groups and
present an upper bound for the outer invariance entropy of admissible pairs
$(K,Q)$. If the stable subgroup of the uncontrolled system is closed and $K$
has positive measure for a left invariant Haar measure, the upper bound
coincides with the outer invariance entropy.

\end{abstract}

\textbf{Keywords.} invariance entropy, linear systems, discrete-time control
systems, Lie groups

\textbf{MSC\ 2020.} 93B05, 37B40, 94A17, 16W20

\section{Introduction}

In 2009, Colonius and Kawan \cite{ColoniusKawan} introduced the theory of
invariance entropy for control systems. This concept is closely related to
feedback entropy introduced by Nair, Evans, Mareels and Moran \cite{NEMM04} in
an engineering context. Specifically, for invariance entropy a pair $(K,Q)$ of
nonempty subsets of the state space is called admissible if $K$ is a compact
subset of $Q$ and for each $x\in K$ there exists a control $v$ such that the
trajectory $\varphi(\mathbb{R}^{+},x,v)\subset Q$. For $T>0$ denote by
$r_{\mathrm{inv}}(T,K,Q)$ the minimal number of controls $u$ such that for
every initial point $x\in K$ there is $u$ with trajectory $\varphi
([0,T],x,u)\subset Q$. Then the invariance entropy is the exponential growth
rate of these numbers as $T$ tends to infinity,
\[
h_{\mathrm{inv}}(K,Q):={\limsup_{T\rightarrow\infty}}\frac{1}{T}\log
r_{\mathrm{inv}}(T,K,Q).
\]
From this paper the theory was developed culminating in Kawan's book
\cite{Kawa13} that compiled all the theory achieved until 2013. In particular,
this book also developed the theory of invariance entropy for discrete time.
Most of the results lead to more explicit expressions when studied in the case
of linear systems. For linear discrete-time systems on Euclidean spaces the
present authors in \cite{Cocosa4} described the invariance pressure and, in
particular, the invariance entropy for subsets $K$ of the control set $D$,
which is unique and bounded under hyperbolicity assumptions. For the
hyperbolic theory of invariance entropy for continuous-time nonlinear control
systems, cf. Da Silva and Kawan \cite{DaSilK16} and also \cite{DaSilK19}.
Invariance entropy for continuous-time linear systems on Lie groups has been
analyzed by Ayala, Da Silva, Jouan, and Zsigmond \cite{AySilJuZs}, cf. also
the references therein for the theory of continuous-time linear control
systems on Lie groups.

For discrete time, the present paper introduces linear control systems on
connected Lie groups and starts the investigation of invariance entropy. We
define discrete-time linear systems on connected Lie groups $G$ as systems of
the form
\[
\Sigma\ :\ g_{k+1}=f(g_{k},u_{k}),\ \ u_{k}\in U,
\]
where $f_{u}(\cdot):=f(\cdot,u):G\rightarrow G$ is an automorphism for $u=0$
and otherwise a diffeomorphism such that $f_{u}(g)=f_{u}(e)\cdot f_{0}(g)$. We
refer to $g_{k+1}=f(g_{k},0)=f_{0}(g_{k})$ as the uncontrolled system.

Outer invariance entropy weakens the requirements on the trajectories: the
trajectories are allowed to go out to $\varepsilon$-neighborhoods
$N_{\varepsilon}(Q)$ of $Q$, and
\[
h_{\mathrm{inv,out}}(K,Q):=\lim_{\varepsilon\rightarrow0}h_{\mathrm{inv}%
}(K,N_{\varepsilon}(Q)).
\]
Our main result establishes that an upper bound for outer invariance entropy
of an admissible pair $(K,Q)$ of discrete-time linear systems is given by the
sum of the logarithms of the modulus of the eigenvalues $\lambda$ of the
differential $(df_{0})_{e}$ with $|\lambda|>1$, where $e$ is the identity of
$G$. The upper bound coincides with the outer invariance entropy, if the
stable subgroup of the uncontrolled system (cf. the definition after formula
(\ref{g})) is closed and $K$ has positive measure for a left invariant Haar
measure $\mu$ on $G$. This bears some similarity to the characterization of
outer invariance entropy in the continuous-time case, cf. Da Silva
\cite{DaSil14}.

Section \ref{Section2} presents the main concepts for discrete-time linear
control system on Lie groups and some examples. In Section \ref{Section2a} we
introduce discrete-time linear control system induced on homogeneous spaces
and Section \ref{Section3} proves the main result of the paper, the
characterization of outer invariance entropy for discrete-time linear control
systems on Lie groups. Finally, Section \ref{Section5} derives a sufficient
condition for closedness of the stable subgroup of the uncontrolled system.

\section{Discrete-time linear control systems on Lie groups\label{Section2}}

In this section we present our definition of discrete-time linear control
systems on Lie groups in analogy to the definition of the continuous-time
linear control systems on Lie groups, derive some properties, and provide
several examples.

Recall (cf. e.g. Sontag \cite{Son98}) that a discrete-time control system on a
topological space $M$ is given by difference equations
\[
x_{k+1}=f(x_{k},u_{k}),\ \ u_{k}\in U,
\]
where $k\in\mathbb{N}_{0}$, the control range $U$ is a nonempty set and
$f:M\times U\rightarrow M$ is a map such that $f_{u}(\cdot):=f(\cdot
,u):M\rightarrow M$ is continuous for each $u\in U$. For initial value
$x_{0}:=x\in M$ and control $u=(u_{i})_{i\in\mathbb{N}_{0}}\in\mathcal{U}%
=U^{\mathbb{N}_{0}}$ the solution of this system is given by%
\[
\varphi(k,x,u)=\left\{
\begin{array}
[c]{lll}%
x & \mbox{for} & k=0\\
f_{u_{k-1}}\circ\cdots\circ f_{u_{1}}\circ f_{u_{0}}(x) & \mbox{for} & k\geq1
\end{array}
\right.  .
\]
Where convenient, we also write $\varphi_{k,u}:=\varphi(k,\cdot,u)$. Now we
present our definition of discrete-time linear control systems on Lie groups.

\begin{definition}
Let $U\subset\mathbb{R}^{m}$ with $0\in U$. A discrete-time control system
\[
\Sigma\ :\ g_{k+1}=f(g_{k},u_{k}),\ \ u_{k}\in U,
\]
on a connected Lie group $G$ is linear if $f_{0}=f(\cdot,0):G\rightarrow G$ is
an automorphism and for each $u\in U$, $f_{u}:G\rightarrow G$ satisfies
\[
f_{u}(g)=f_{u}(e)\cdot f_{0}(g)=L_{f_{u}(e)}(f_{0}(g))\text{ for all }g\in G.
\]

\end{definition}

Here \textquotedblleft$\ \cdot$\textquotedblright\ denotes the product of $G$,
it will be omitted when it is clear by the context. Thus $f_{u}(g)$ is given
by left translation $L_{f_{u}(e)}$ of $f_{0}(g)$ by $f_{u}(e)$, and
$f_{0}(g)^{-1}=f_{0}(g^{-1})$ implies that for each $u\in U$ the map
$f_{u}:G\rightarrow G$ is a diffeomorphism with inverse%
\[
(f_{u})^{-1}(g)=(f_{0})^{-1}\left(  \left(  f_{u}(e)\right)  ^{-1}\cdot
g\right)  =(f_{0})^{-1}\circ L_{\left(  f_{u}(e)\right)  ^{-1}}(g),g\in G.
\]

\begin{example}
\label{Example2}Consider on the additive Lie group $G=\mathbb{R}^{d}$ the
control system given by
\[
x_{k+1}=Ax_{k}+Bu_{k},\ \ u_{k}\in U,
\]
where $A\in GL(d,\mathbb{R}),B\in\mathbb{R}^{d\times m}$, and $0\in
U\subset\mathbb{R}^{m}$. In this case, $f:\mathbb{R}^{d}\times U\rightarrow
\mathbb{R}^{d}$ is given by $f(x,u)=Ax+Bu$. Note that $f_{0}(x)=Ax$ is an
automorphism of $\mathbb{R}^{d}$ and $f_{u}(e)=f_{u}(0)=Bu$, hence
$f_{u}(x)=f_{0}(x)+f_{u}(0)=f_{u}(0)+f_{0}(x)$. In this case, the solutions
are given by
\[
\varphi(k,x,u)=A^{k}x+\sum_{j=0}^{k-1}A^{k-1-j}Bu_{j}.
\]

\end{example}

The next result shows that for linear systems the solution starting in a point
$g$ is a translation of the solution starting in the identity $e$.

\begin{proposition}
\label{proposition5}Consider a discrete-time linear control system
$g_{k+1}=f(g_{k},u_{k})$, $u_{k}\in U$, on a Lie group $G$. Then it follows
for all $g\in G$ and $u=(u_{i})\in\mathcal{U}$ that
\begin{equation}
\varphi(k,g,u)=\varphi(k,e,u)f_{0}^{k}(g)\text{ for all }k\in\mathbb{N}.
\label{solution}%
\end{equation}

\end{proposition}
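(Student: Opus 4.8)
The plan is to establish \eqref{solution} by induction on $k\in\mathbb{N}$, using only two ingredients: the defining identity $f_u(g)=f_u(e)\cdot f_0(g)$ of a discrete-time linear system, and the fact that $f_0$ is a Lie group automorphism, hence in particular multiplicative, $f_0(ab)=f_0(a)\cdot f_0(b)$. No properties of the control sequence $u=(u_i)$ are needed beyond the recursive definition of $\varphi$.

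For the base case $k=1$ one unwinds the definition of $\varphi$: by construction $\varphi(1,g,u)=f_{u_0}(g)$ and $\varphi(1,e,u)=f_{u_0}(e)$, so the linearity relation at the point $g$ gives $\varphi(1,g,u)=f_{u_0}(e)\cdot f_0(g)=\varphi(1,e,u)\,f_0(g)$, which is \eqref{solution} for $k=1$. For the inductive step, assume $\varphi(k,g,u)=\varphi(k,e,u)\,f_0^k(g)$. Since $\varphi(k+1,g,u)=f_{u_k}(\varphi(k,g,u))$, applying the linearity relation at the point $\varphi(k,g,u)$ and then the induction hypothesis yields
\[
\varphi(k+1,g,u)=f_{u_k}(e)\cdot f_0\bigl(\varphi(k,e,u)\,f_0^k(g)\bigr).
\]
Now I would invoke multiplicativity of $f_0$ to split the argument, $f_0\bigl(\varphi(k,e,u)\,f_0^k(g)\bigr)=f_0(\varphi(k,e,u))\cdot f_0^{k+1}(g)$, and then run the linearity relation backwards, this time at the point $\varphi(k,e,u)$, to obtain
\[
\varphi(k+1,g,u)=f_{u_k}(e)\cdot f_0(\varphi(k,e,u))\cdot f_0^{k+1}(g)=f_{u_k}(\varphi(k,e,u))\cdot f_0^{k+1}(g)=\varphi(k+1,e,u)\,f_0^{k+1}(g),
\]
which closes the induction.

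There is essentially no serious obstacle here; the only point that requires a little care is that $G$ need not be abelian, so one must keep the translation factor $\varphi(k,e,u)$ on the \emph{left} of $f_0^k(g)$ throughout, and invoke the multiplicativity of $f_0$ exactly at the place where $\varphi(k,e,u)$ and $f_0^k(g)$ get separated. As a sanity check, in the abelian prototype $G=\mathbb{R}^d$ of Example \ref{Example2} the formula \eqref{solution} reduces to the classical variation-of-constants identity $\varphi(k,x,u)=A^k x+\sum_{j=0}^{k-1}A^{k-1-j}Bu_j$, with $\varphi(k,0,u)$ being precisely the summation term and $f_0^k(x)=A^k x$ the homogeneous part.
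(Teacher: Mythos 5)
Your proof is correct and follows essentially the same route as the paper: induction on $k$, with the inductive step using the cocycle property $\varphi(k+1,g,u)=f_{u_k}(\varphi(k,g,u))$, the defining relation $f_{u}(g)=f_{u}(e)f_{0}(g)$, the multiplicativity of the automorphism $f_{0}$, and the defining relation run backwards at $\varphi(k,e,u)$. Your remark about keeping the translation factor on the left in the non-abelian setting correctly identifies the only point requiring care.
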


\begin{proof}
The proof will follow by induction over $k\in\mathbb{N}$. Note initially that
for $g\in G$ and $u\in\mathcal{U}$ we have
\[
\varphi(1,g,u)=f_{u_{0}}(g)=f_{u_{0}}(e)f_{0}(g)=\varphi(1,e,u)f_{0}(g).
\]
Now, suppose that the equality holds for $k\in\mathbb{N}$, that is,
$\varphi(k,g,u)=\varphi(k,e,u)f_{0}^{k}(g)$. With the $k$-shift to the right
given by $\Theta_{k}(u_{i})_{i\in\mathbb{N}}=(u_{i+k})_{i\in\mathbb{N}}$, this
implies
\begin{align*}
\varphi(k+1,g,u)  &  =\varphi(1,\varphi(k,g,u),\Theta_{k}u)=\varphi
(1,\varphi(k,e,u)f_{0}^{k}(g),\Theta_{k}u)\\
&  =f_{u_{k}}(\varphi(k,e,u)f_{0}^{k}(g))=f_{u_{k}}(e)f_{0}(\varphi
(k,e,u)f_{0}^{k}(g))\\
&  =f_{u_{k}}(e)f_{0}(\varphi(k,e,u))f_{0}^{k+1}(g)=f_{u_{k}}(\varphi
(k,e,u))f_{0}^{k+1}(g)\\
&  =\varphi(k+1,e,u)f_{0}^{k+1}(g).
\end{align*}

\end{proof}

\begin{remark}
For a continuous-time linear control system on a connected Lie group the
solution is given by%
\[
\varphi(t,g,u)=\varphi(t,e,u)\varphi(t,g,0),
\]
cf. Ayala, Da Silva, and Zsigmond \cite[formula (7)]{AyalDSZ17}. The solution
formula (\ref{solution}) for discrete-time linear systems is analogous.
\end{remark}

\begin{example}
\label{example5} Consider the connected component of the identity for the
affine group
\[
\emph{Aff}(2,\mathbb{R})_{0}=\left\{  \left[
\begin{array}
[c]{cc}%
x & y\\
0 & 1
\end{array}
\right]  ;\ \ x>0\ \mbox{ and }\ y\in\mathbb{R}\right\}  .
\]
This group can be seen as $\mathbb{R}_{+}\times\mathbb{R}$ provided with the
product
\[
(x_{1},y_{1})\cdot(x_{2},y_{2})=(x_{1}x_{2},x_{1}y_{2}+y_{1}).
\]
Define $f:\emph{Aff}(2,\mathbb{R})_{0}\times U\rightarrow\emph{Aff}%
(2,\mathbb{R})_{0}$ as $f((x,y),u)=(xe^{u},ye^{2+u}+u)$, where $U\subset
\mathbb{R}$ with $0\in U$. Elementary calculations show that $f_{0}%
:\emph{Aff}(2,\mathbb{R})_{0}\rightarrow\emph{Aff}(2,\mathbb{R})_{0}$ is an
automorphism and $f_{u}(x,y)=f_{u}(1,0)\cdot f_{0}(x,y)$, for all
$(x,y)\in\emph{Aff}(2,\mathbb{R})_{0}$. Hence,
\[
(x_{k+1},y_{k+1})=f((x_{k},y_{k}),u_{k}),\ u_{k}\in U,
\]
is a discrete-time linear control system on $\emph{Aff}(2,\mathbb{R})_{0}$.

For $u=(u_{i})_{i\in\mathbb{N}_{0}}\in\mathcal{U}$, denote by $S_{k}(u)$ the
sum $\sum_{i=0}^{k}u_{i}$. Proposition \ref{proposition5} and induction over
$k$ imply that for all $k\in\mathbb{N}_{0}$, $(x,y)\in\emph{Aff}%
(2,\mathbb{R})_{0}$ and $u\in\mathcal{U}$,%
\begin{align*}
\varphi(k,(x,y),u)  &  =\varphi(k,(1,0),u)f_{0}^{k}(x,y)\\
&  =\left(  e^{S_{k-1}(u)},\sum_{j=0}^{k-1}u_{j}e^{2(k-1-j)+S_{k-1}%
(u)-S_{j}(u)}\right)  \left(  x,ye^{2k}\right) \\
&  =\left(  xe^{S_{k-1}(u)},ye^{2k+S_{k-1}(u)}+\sum_{j=0}^{k-1}u_{j}%
e^{2(k-1-j)+S_{k-1}(u)-S_{j}(u)}\right)  .
\end{align*}

\end{example}

\begin{example}
\label{Ex4} Let
\[
G=\left\{  \left[
\begin{array}
[c]{ccc}%
1 & x_{2} & x_{1}\\
0 & 1 & x_{3}\\
0 & 0 & 1
\end{array}
\right]  ;\ (x_{1},x_{2},x_{3})\in\mathbb{R}^{3}\right\}
\]
be the Heisenberg group. Note that $G$ is diffeomorphic to $\mathbb{R}^{3}$
with the product
\[
(x_{1},x_{2},x_{3})\cdot(y_{1},y_{2},y_{3})=(x_{1}+y_{1}+x_{2}y_{3}%
,x_{2}+y_{2},x_{3}+y_{3}).
\]
Consider the automorphism $f_{0}:G\rightarrow G$ given by
\[
f_{0}(x_{1},x_{2},x_{3})=\left(  x_{1}+x_{2}+\dfrac{x_{2}^{2}}{2},x_{2}%
,x_{2}+x_{3}\right)
\]
and for each $u\in U$, define the diffeomorphism $f_{u}:G\rightarrow G$ by
\[
f_{u}(x_{1},x_{2},x_{3})=\left(  x_{1}+x_{2}+\dfrac{x_{2}^{2}}{2}%
+ux_{2}+ux_{3}-\dfrac{u}{2}-\dfrac{u^{2}}{3},x_{2}+u,x_{2}+x_{3}-\dfrac{u}%
{2}\right)  .
\]
It is not difficult to see that for each $g\in G$, $f_{u}(g)=f_{u}(e)f_{0}%
(g)$. Hence, the system
\[
\Sigma\ :\ g_{k+1}=f(g_{k},u_{k})
\]
on $G$ is linear. By Proposition \ref{proposition5}, the solutions are given
by
\[
\varphi(k,g,u)=\varphi(k,e,u)f_{0}^{k}(g)=\varphi(k,e,u)\left(  x_{1}%
+kx_{2}+\dfrac{k}{2}x_{2}^{2},x_{2},kx_{2}+x_{3}\right)  .
\]

\end{example}

\begin{remark}
\label{Obs5} For a discrete-time linear control system $g_{k+1}=f(g_{k}%
,u_{k})$, $u_{k}\in U$, on a Lie group $G$, we have that $(df_{0}%
)_{e}:\mathfrak{g}\rightarrow\mathfrak{g}$ is a Lie algebra isomorphism,
because $f_{0}$ is an automorphism of $G$. Also, we can see that $(df_{0}%
^{n})_{e}=[(df_{0})_{e}]^{n}$ for each $n\in\mathbb{Z}$ and hence for each
$X\in\mathfrak{g}$%
\[
f_{0}^{n}(\exp X)=\exp([(df_{0})_{e}]^{n}X).
\]

\end{remark}

Given the automorphism $f_{0}$, we can define several Lie subalgebras that are
intrinsically associated with its dynamics. Consider an eigenvalue $\alpha$ of
$(df_{0})_{e}$ and its generalized eigenspace
\[
\mathfrak{g}_{\alpha}=\{X\in\mathfrak{g};\ ((df_{0})_{e}-\alpha)^{n}X=0,\text{
for some }n\geq1\}.
\]

The following proposition, whose proof can be found in Ayala and Da Silva
\cite[Proposition 2.1]{AyaDS16}, will be useful in order to define dynamical
subalgebras of $\mathfrak{g}$ related to the differential of $f_{0}$ at $e\in
G$.

\begin{proposition}
\label{dynliealg} If $\alpha$ and $\beta$ are eigenvalues of $(df_{0})_{e}$,
then
\[
[\mathfrak{g}_{\alpha},\mathfrak{g}_{\beta}]\subset\mathfrak{g}_{\alpha\beta
},
\]
where $\mathfrak{g}_{\alpha\beta}=\{0\}$ if $\alpha\beta$ is not an eigenvalue
of $(df_{0})_{e}$.
\end{proposition}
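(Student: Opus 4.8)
The plan is to set $\phi := (df_0)_e$, which by Remark \ref{Obs5} is a Lie algebra automorphism of $\mathfrak{g}$; extending it $\mathbb{C}$-linearly we may work on the complexification $\mathfrak{g}_{\mathbb{C}}$, where the generalized eigenspaces naturally live (the eigenvalues $\alpha,\beta$ need not be real). The whole argument rests on the intertwining relation $\phi([X,Y])=[\phi X,\phi Y]$ coming from the automorphism property, together with the fact that $\phi$ acts nilpotently, up to the scalar $\alpha$ (resp. $\beta$), on $\mathfrak{g}_{\alpha}$ (resp. $\mathfrak{g}_{\beta}$).

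First I would write $\phi|_{\mathfrak{g}_{\alpha}}=\alpha\,I+N_{\alpha}$ and $\phi|_{\mathfrak{g}_{\beta}}=\beta\,I+N_{\beta}$ with $N_{\alpha},N_{\beta}$ nilpotent, which is exactly the definition of the generalized eigenspaces. Next I would view the Lie bracket as the linear map $\psi:\mathfrak{g}_{\alpha}\otimes\mathfrak{g}_{\beta}\rightarrow\mathfrak{g}$, $X\otimes Y\mapsto[X,Y]$, so that the automorphism identity reads $\phi\circ\psi=\psi\circ\Phi$, where $\Phi:=\phi|_{\mathfrak{g}_{\alpha}}\otimes\phi|_{\mathfrak{g}_{\beta}}$ acts on $\mathfrak{g}_{\alpha}\otimes\mathfrak{g}_{\beta}$. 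Subtracting $\alpha\beta$ times the identity and iterating gives $(\phi-\alpha\beta\,I)^{k}\circ\psi=\psi\circ(\Phi-\alpha\beta\,I)^{k}$ for every $k\in\mathbb{N}$.

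The key step is the identity
\[
\Phi-\alpha\beta\,I=\alpha\,(I\otimes N_{\beta})+\beta\,(N_{\alpha}\otimes I)+N_{\alpha}\otimes N_{\beta},
\]
whose three summands are pairwise commuting nilpotent operators on $\mathfrak{g}_{\alpha}\otimes\mathfrak{g}_{\beta}$; hence $\Phi-\alpha\beta\,I$ is itself nilpotent, say $(\Phi-\alpha\beta\,I)^{N}=0$. Then for all $X\in\mathfrak{g}_{\alpha}$, $Y\in\mathfrak{g}_{\beta}$ we obtain $(\phi-\alpha\beta\,I)^{N}[X,Y]=\psi\big((\Phi-\alpha\beta\,I)^{N}(X\otimes Y)\big)=0$, so $[X,Y]$ lies in the generalized eigenspace $\mathfrak{g}_{\alpha\beta}$ by definition. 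Finally, if $\alpha\beta$ is not an eigenvalue of $\phi$, then $\phi-\alpha\beta\,I$, and therefore its $N$-th power, is invertible, which forces $[X,Y]=0$; that is, $\mathfrak{g}_{\alpha\beta}=\{0\}$ in this case, completing the proof.

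This is essentially routine linear algebra once the tensor-product operator $\Phi$ is introduced as the right bookkeeping device; the only points needing a little care are the passage to the complexification when $\alpha$ or $\beta$ is non-real and the (immediate) verification that the three summands of $\Phi-\alpha\beta\,I$ commute and are nilpotent. If one prefers to avoid tensor products entirely, the same conclusion follows by double induction on the least $p,q$ with $N_{\alpha}^{\,p}X=0$ and $N_{\beta}^{\,q}Y=0$, starting from the identity $(\phi-\alpha\beta\,I)[X,Y]=\alpha[X,N_{\beta}Y]+\beta[N_{\alpha}X,Y]+[N_{\alpha}X,N_{\beta}Y]$.
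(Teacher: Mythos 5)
Your proof is correct. Note that the paper does not actually prove this proposition: it defers to Ayala and Da Silva \cite[Proposition 2.1]{AyaDS16}, so there is no in-paper argument to compare against. Your argument is a clean, self-contained verification of this standard fact. The intertwining identity $(\phi-\alpha\beta\,I)^{k}\circ\psi=\psi\circ(\Phi-\alpha\beta\,I)^{k}$ follows correctly by iteration from $\phi\circ\psi=\psi\circ\Phi$ (which itself is just the automorphism property $\phi[X,Y]=[\phi X,\phi Y]$ extended linearly from elementary tensors), the expansion $\Phi-\alpha\beta\,I=\alpha(I\otimes N_{\beta})+\beta(N_{\alpha}\otimes I)+N_{\alpha}\otimes N_{\beta}$ is right, and the three summands do commute pairwise and are nilpotent, so their sum is nilpotent. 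The final step for the case where $\alpha\beta$ is not an eigenvalue (invertibility of $\phi-\alpha\beta\,I$ forces $[X,Y]=0$) is also correct. You are right to flag the complexification: the generalized eigenspaces for non-real $\alpha$ live in $\mathfrak{g}_{\mathbb{C}}$, and the bracket and $\phi$ extend $\mathbb{C}$-bilinearly, so everything goes through there; this is how the statement must be read anyway, since the paper later recombines conjugate eigenspaces only through the moduli $|\alpha|$ in the definitions of $\mathfrak{g}^{\pm}$ and $\mathfrak{g}^{0}$. The elementary alternative you sketch at the end, via double induction on the nilpotency orders of $N_{\alpha}$ on $X$ and $N_{\beta}$ on $Y$ starting from $(\phi-\alpha\beta\,I)[X,Y]=\alpha[X,N_{\beta}Y]+\beta[N_{\alpha}X,Y]+[N_{\alpha}X,N_{\beta}Y]$, is the more traditional route (and closer to what one typically finds in the cited reference); the tensor-product packaging buys you a one-line nilpotency argument in place of the bookkeeping of the induction.
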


Due to Proposition \ref{dynliealg}, we define the unstable, center, and
unstable Lie subalgebras (cf. \cite{AyaDS16}) by%
\begin{equation}
\mathfrak{g}^{+}=\bigoplus_{|\alpha|>1}\mathfrak{g}_{\alpha},\ \ \mathfrak{g}%
^{0}=\bigoplus_{|\alpha|=1}\mathfrak{g}_{\alpha},\ \ \mathfrak{g}%
^{-}=\bigoplus_{0<|\alpha|<1}\mathfrak{g}_{\alpha}. \label{g}%
\end{equation}

Proposition \ref{dynliealg} implies that $\mathfrak{g}^{+}$ and $\mathfrak{g}%
^{-}$ are nilpotent Lie subalgebras of $\mathfrak{g}$. Since $(df_{0})_{e}$ is
a Lie algebra isomorphism, the decomposition $\mathfrak{g}=\mathfrak{g}%
^{+}\oplus\mathfrak{g}^{0}\oplus\mathfrak{g}^{-}$ holds. The center-unstable
and center-stable Lie subalgebras are%
\[
\mathfrak{g}^{+,0}=\mathfrak{g}^{+}\oplus\mathfrak{g}^{0}\ \text{ and
}\ \mathfrak{g}^{-,0}=\mathfrak{g}^{-}\oplus\mathfrak{g}^{0}.
\]
We will also need the connected Lie groups $G^{-,0}$, $G^{+,0}$, $G^{-}$ and
$G^{+}$ corresponding to $\mathfrak{g}^{-,0}$, $\mathfrak{g}^{+,0}$,
$\mathfrak{g}^{-}$ and $\mathfrak{g}^{+}$, respectively. In particular, we
refer to $G^{-}$ and $G^{+}$ as the stable and the unstable subgroup of the
uncontrolled system, resp.

\begin{remark}
\label{Obs12} The restrictions of $(df_{0})_{e}$ to the Lie subalgebras
$\mathfrak{g}^{+}$, $\mathfrak{g}^{0}$, and $\mathfrak{g}^{-}$ satisfy:
\[
|[(df_{0})_{e}]^{n}X|\geq c\sigma^{-n}|X|,\ \text{ for any }\ X\in
\mathfrak{g}^{+},\ n\in\mathbb{N},
\]
and
\[
|[(df_{0})_{e}]^{n}Y|\leq c^{-1}\sigma^{n}|Y|,\ \text{ for any }
\ Y\in\mathfrak{g}^{-},\ n\in\mathbb{N},
\]
for some $c\geq1$ and $\sigma\in(0,1)$ and, for all $a>0$ and $Z\in
\mathfrak{g}^{0}$ it holds that
\[
|[(df_{0})_{e}]^{n}Z|\sigma^{a|n|}\rightarrow0\ \text{ as }\ n\rightarrow
\pm\infty.
\]

\end{remark}

\section{Linear systems induced on homogeneous spaces\label{Section2a}}

In this section, we define a induced (discrete-time) linear system on a
homogeneous space $G/H$ from a discrete-time linear system on a Lie group $G$.
This construction will be important to get a formula for the outer invariance
entropy presented in the Theorem \ref{Theo11}.

Consider the discrete-time linear control system
\begin{equation}
g_{k+1}=f(g_{k},u_{k}),\ \ \ u=(u_{i})_{i\in\mathbb{N}_{0}}\in\mathcal{U},
\label{lineardisc}%
\end{equation}
on a Lie group $G$ and let $H$ a Lie subgroup of $G$ which is $f_{0}%
$-invariant. If $H$ is closed, this induces a control system on the
homogeneous space $G/H$ in the following way: define $\bar{f}:G/H\times
U\rightarrow G/H$ as $\bar{f}(gH,u)=f(g,u)H$. The $f_{0}$-invariance of $H$
implies that $\bar{f}$ is well defined. Note also that for each $u\in U$, the
map $\bar{f}_{u}:G/H\rightarrow G/H$ is a diffeomorphism with inverse $\bar
{f}_{u}^{-1}(gH)=f_{u}^{-1}(g)H$, because $\bar{f}_{u}\circ\pi=\pi\circ f_{u}$
and $\bar{f}_{u}^{-1}\circ\pi=\pi\circ f_{u}^{-1}$ are differentiable maps,
where $\pi:G\rightarrow G/H$ is the natural projection. This induces a
discrete-time control system
\begin{equation}
x_{k+1}=\bar{f}(x_{k},u_{k}),\ x_{k}\in G/H,\ u=(u_{i})_{i\in\mathbb{N}_{0}%
}\in\mathcal{U}, \label{induced}%
\end{equation}
on $G/H$ with solutions denoted by $\bar{\varphi}$. Since for each
$k\in\mathbb{N}$, $\bar{f}_{0}^{k}\circ\pi=\pi\circ f_{0}^{k}$, we have that
\[
\pi(\varphi(k,g,u))=\bar{\varphi}(k,\pi(g),u),
\]
for all $k\in\mathbb{N}_{0}$, $g\in G$ and $u\in\mathcal{U}$. In other words,
$(\pi,\mathrm{id}_{\mathcal{U}})$ is a semi-conjugacy between the systems
(\ref{lineardisc}) and (\ref{induced}) (cf. Kawan \cite[Definition
2.4]{Kawa13}). For each $g\in G$ we denote by $\mathcal{L}_{g}$ the left
translation $G/H\ni xH\mapsto gxH\in G/H$. Then we find that
\begin{align}
\bar{\varphi}(k,gH,u)  &  =\pi(\varphi(k,g,u))=\pi(\varphi_{k,u}(e)f_{0}%
^{k}(g))\nonumber\\
&  =\varphi_{k,u}(e)f_{0}^{k}(g)H=\varphi_{k,u}(e)\bar{f}_{0}^{k}%
(gH)\label{linear2}\\
&  =\mathcal{L}_{\varphi_{k,u}(e)}(\bar{f}_{0}^{k}(gH)).\nonumber
\end{align}
This shows that the solutions of (\ref{induced}) satisfy properties similar to
equality (\ref{solution}) for a linear system on $G$, hence we call it the
\textbf{induced linear system }on the homogeneous space $G/H$.

\begin{proposition}
\label{decomp} Let $G$ a Lie group with Lie algebra $\mathfrak{g}$. Assume
that $\mathfrak{g}$ decomposes as $\mathfrak{g}=\mathfrak{h}\oplus
\mathfrak{l}$, where $\mathfrak{h}$ and $\mathfrak{l}$ are $(df_{0})_{e}%
$-invariant Lie subalgebras of $\mathfrak{g}$. Consider the connected Lie
subgroup $H$ of $G$ with Lie algebra $\mathfrak{h}$. If $H$ is closed, then
$(d\bar{f}_{0}^{k})_{eH}=(df_{0})_{e}^{k}|_{\mathfrak{l}}$, for all
$k\in\mathbb{N}_{0}$.
\end{proposition}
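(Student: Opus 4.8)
The plan is to identify the tangent space $T_{eH}(G/H)$ with the complement $\mathfrak{l}$ via the projection $\pi: G \to G/H$, and then track how $\bar{f}_0^k$ acts under this identification. Concretely, since $H$ is closed, $G/H$ is a manifold and the projection $\pi$ is a submersion with $\ker (d\pi)_e = \mathfrak{h}$; because $\mathfrak{g} = \mathfrak{h} \oplus \mathfrak{l}$, the restriction $(d\pi)_e|_{\mathfrak{l}} : \mathfrak{l} \to T_{eH}(G/H)$ is a linear isomorphism. I would fix this isomorphism, call it $\iota := (d\pi)_e|_{\mathfrak{l}}$, once and for all, and interpret the claimed identity $(d\bar{f}_0^k)_{eH} = (df_0)_e^k|_{\mathfrak{l}}$ as the statement $(d\bar{f}_0^k)_{eH} \circ \iota = \iota \circ \big((df_0)_e^k|_{\mathfrak{l}}\big)$, i.e.\ the differential of $\bar f_0^k$ at $eH$ corresponds to the restriction $(df_0)_e^k|_{\mathfrak{l}}$ under $\iota$.

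The key computation is the chain rule applied to the semi-conjugacy relation $\bar{f}_0^k \circ \pi = \pi \circ f_0^k$, which was already established in the text (Section \ref{Section2a}). Differentiating at $e$ gives
\[
(d\bar{f}_0^k)_{eH} \circ (d\pi)_e = (d\pi)_{f_0^k(e)} \circ (df_0^k)_e = (d\pi)_e \circ (df_0)_e^k,
\]
using $f_0^k(e) = e$ (as $f_0$ is an automorphism) and Remark \ref{Obs5}, which gives $(df_0^k)_e = [(df_0)_e]^k$. Restricting both sides to $\mathfrak{l}$, the right-hand side is $(d\pi)_e \circ (df_0)_e^k|_{\mathfrak{l}} = \iota \circ (df_0)_e^k|_{\mathfrak{l}}$, where the last step uses that $(df_0)_e^k$ maps $\mathfrak{l}$ into $\mathfrak{l}$ by the $(df_0)_e$-invariance of $\mathfrak{l}$. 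The left-hand side, restricted to $\mathfrak{l}$, is $(d\bar{f}_0^k)_{eH} \circ \iota$. Hence $(d\bar{f}_0^k)_{eH} \circ \iota = \iota \circ (df_0)_e^k|_{\mathfrak{l}}$, which is exactly the asserted identity once $T_{eH}(G/H)$ is identified with $\mathfrak{l}$ via $\iota$.

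The only real subtlety — and the step I would be most careful about — is making the identification $T_{eH}(G/H) \cong \mathfrak{l}$ explicit and checking that the formula in the proposition is to be read modulo this canonical isomorphism; without that reading the equality $(d\bar f_0^k)_{eH} = (df_0)_e^k|_{\mathfrak{l}}$ does not typecheck, since the left side is an endomorphism of $T_{eH}(G/H)$ and the right side an endomorphism of $\mathfrak{l}$. I would state this identification at the outset of the proof. Everything else is a routine application of the chain rule to the already-noted relation $\bar f_0^k \circ \pi = \pi \circ f_0^k$ together with $f_0(e)=e$ and the $(df_0)_e$-invariance of the summands; closedness of $H$ enters only to guarantee that $G/H$ is a smooth manifold so that these differentials are defined.
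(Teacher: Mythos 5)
Your proposal is correct and follows essentially the same route as the paper: both differentiate the semi-conjugacy relation $\bar f_0^k\circ\pi=\pi\circ f_0^k$ at $e$ and then identify the tangent space at $eH$ with $\mathfrak{l}$ (the paper passes through the quotient map $\overline{(df_0)_e}$ on $\mathfrak{g}/\mathfrak{h}$ before identifying $\mathfrak{g}/\mathfrak{h}$ with $\mathfrak{l}$, whereas you use $(d\pi)_e|_{\mathfrak{l}}$ directly, which amounts to the same thing). Your explicit attention to the identification that makes the stated equality typecheck is a point the paper treats more casually.
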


\begin{proof}
The $(df_{0})_{e}$-invariance of $\mathfrak{h}$ and $\mathfrak{l}$ allows us
to consider the well defined linear isomorphism $\overline{(df_{0})_{e}%
}:\mathfrak{g}/\mathfrak{h}\rightarrow\mathfrak{g}/\mathfrak{h}$ given by
$\overline{(df_{0})_{e}}(X+\mathfrak{h})=(df_{0})_{e}X+\mathfrak{h}$. If $\pi$
is the natural projection of $G$ onto $G/H$, we can see that $\overline
{(df_{0})_{e}}$ satisfies $\overline{(df_{0})_{e}}\circ(d\pi)_{e}=(d\pi
)_{e}\circ(df_{0})_{e}$. Hence, for all $X\in\mathfrak{g}$ and $k\in
\mathbb{N}_{0}$ we have
\begin{align*}
(d\bar{f}_{0}^{k})_{eH}(X+\mathfrak{h})  &  =d(\bar{f}_{0}^{k}\circ\pi
)_{e}X=d(\pi\circ f_{0}^{k})_{e}X=(d\pi)_{e}\circ(df_{0}^{k})_{e}X\\
&  =(d\pi)_{e}\circ(df_{0})_{e}^{k}X=(\overline{(df_{0})_{e}})^{k}\circ
(d\pi)_{e}X\\
&  =(\overline{(df_{0})_{e}})^{k}(X+\mathfrak{h}).
\end{align*}
By invariance, we can identify $\mathfrak{g}/\mathfrak{h}$ with $\mathfrak{l}$
and, therefore, $\overline{(df_{0})_{e}}$ with $(df_{0})_{e}^{k}%
|_{\mathfrak{l}}$ and the desired equality holds.
\end{proof}

\begin{definition}
A measure $\mu$ on a homogeneous space $G/H$ is a \textbf{$G$-invariant} Borel
measure if $\mu\left(  \mathcal{L}_{g}(A)\right)  =\mu(A)$, for all $g\in G$
and all Borel set $A\subset G/H$.
\end{definition}

The following proposition is an adaptation of Da Silva \cite[Proposition
4.6]{DaSil14} in our context of discrete-time linear systems.

\begin{proposition}
\label{borel}If the connected subgroup $G^{-}$ corresponding to $\mathfrak{g}%
^{-}$ defined in (\ref{g}) is closed, the homogeneous space $G/G^{-}$ admits a
unique (up to a scalar) $G$-invariant Borel measure.
\end{proposition}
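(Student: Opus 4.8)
The plan is to reduce the statement to the classical criterion for the existence of an invariant measure on a homogeneous space: if $G$ is a locally compact group and $H$ a closed subgroup, then $G/H$ carries a nonzero $G$-invariant Radon measure, and in that case the measure is unique up to a positive scalar, precisely when the modular functions satisfy $\Delta_G|_H=\Delta_H$. Since a connected Lie group (and any of its homogeneous spaces by a closed subgroup) is second countable and $\sigma$-compact, ``Radon measure'' may be replaced by ``Borel measure'' here. Thus, with $H=G^-$ (closed by hypothesis), it suffices to prove $\Delta_G(h)=\Delta_{G^-}(h)$ for every $h\in G^-$.

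First I would deal with the right-hand side. By the remark following (\ref{g}), $\mathfrak g^-$ is a nilpotent Lie algebra, so $G^-$ is a connected nilpotent Lie group, hence unimodular; thus $\Delta_{G^-}\equiv 1$. It remains to show $\Delta_G(h)=1$ for all $h\in G^-$. Using that $\Delta_G(g)=|\det \mathrm{Ad}_G(g)|^{\pm1}$, this amounts to checking that $\mathrm{Ad}_G(h)$ has determinant of modulus one whenever $h\in G^-$.

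The key step is to show $\mathrm{ad}_X\colon\mathfrak g\to\mathfrak g$ is nilpotent for every $X\in\mathfrak g^-$. Let $m_1>\dots>m_r>0$ be the distinct moduli of the eigenvalues of $(df_0)_e$ and put $\mathfrak g_{(m)}:=\bigoplus_{|\alpha|=m}\mathfrak g_\alpha$, so $\mathfrak g=\bigoplus_i\mathfrak g_{(m_i)}$. By Proposition \ref{dynliealg} one has $[\mathfrak g_\alpha,\mathfrak g_\beta]\subseteq\mathfrak g_{\alpha\beta}$; hence, since every component of $X\in\mathfrak g^-$ lies in some $\mathfrak g_\alpha$ with $|\alpha|<1$, it follows that $\mathrm{ad}_X(\mathfrak g_{(m_j)})\subseteq\bigoplus_{l>j}\mathfrak g_{(m_l)}$, so $\mathrm{ad}_X$ is strictly level-lowering and $\mathrm{ad}_X^{\,r}=0$ on $\mathfrak g$. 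Therefore $\mathrm{Ad}_G(\exp X)=e^{\mathrm{ad}_X}$ is unipotent, so $\det\mathrm{Ad}_G(\exp X)=1$; as $G^-$ is connected it is generated by $\exp(\mathfrak g^-)$, whence $\det\mathrm{Ad}_G(h)=1$ for all $h\in G^-$. Combined with $\Delta_{G^-}\equiv1$, this gives $\Delta_G|_{G^-}=\Delta_{G^-}$, and the criterion yields both existence and uniqueness up to a scalar of the $G$-invariant measure on $G/G^-$.

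I expect the main obstacle to be the modular-function bookkeeping — in particular, making sure the nilpotency argument genuinely covers all of $\mathfrak g$ rather than only $\mathfrak g^-$ (this is exactly what the grading-by-modulus device above secures) and keeping conventions for $\Delta$ and $\det\mathrm{Ad}$ consistent. The only other point, the passage between Radon and Borel measures, is routine given second countability of everything in sight.
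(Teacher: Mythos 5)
Your proposal is correct and follows essentially the same route as the paper: reduce to the modular-function criterion of Knapp's Theorem 8.36, note $\Delta_{G^-}\equiv 1$ by nilpotency, and show $\Delta_G|_{G^-}\equiv 1$ via nilpotency of $\mathrm{ad}(X)$ on all of $\mathfrak g$ for $X\in\mathfrak g^-$, using $[\mathfrak g_\alpha,\mathfrak g_\beta]\subseteq\mathfrak g_{\alpha\beta}$. Your grading-by-modulus argument is in fact a slightly more careful justification of the nilpotency of $\mathrm{ad}(X)$ for a general (non-homogeneous) $X\in\mathfrak g^-$ than the paper's; otherwise the two proofs coincide, the paper concluding via $\mathrm{tr}(\mathrm{ad}(X))=0$ where you invoke unipotency of $e^{\mathrm{ad}(X)}$.
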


\begin{proof}
Denote by $\Delta_{G}$ and $\Delta_{G^{-}}$ the modular functions of $G$ and
$G^{-}$, respectively, cf. Knapp \cite[Chapter VIII, Section 2]{Knapp}. The
result will follow from \cite[Theorem 8.36]{Knapp}, if we can show that
$(\Delta_{G})|_{G^{-}}=\Delta_{G^{-}}$.

Since $G^{-}$ is nilpotent, $\Delta_{G^{-}}(h)=1$ for all $h\in G^{-}$. On the
other hand, for any two eigenvalues $\alpha,\beta$ of $(df_{0})_{e}$ we have
\[
\mathrm{ad}(\mathfrak{g}_{\alpha})^{n}\mathfrak{g}_{\beta}\subset
\mathfrak{g}_{\alpha^{n}\beta},\ \ \forall\ n\in\mathbb{N},
\]
which implies that for each $X\in\mathfrak{g}^{-}$, $\text{ad}(X):\mathfrak{g}%
\rightarrow\mathfrak{g}$ is a nilpotent linear map. Writing for $X\in
\mathfrak{g}^{-}$
\[
X=\sum_{|\alpha|<1}a_{\alpha}X_{\alpha},\quad a_{\alpha}\in\mathbb{R},
\]
where the sum is taken over all eigenvalues $\alpha$ of $(df_{0})_{e}$ with
$|\alpha|<1$, we get
\[
\mathrm{tr}(\mathrm{ad}(X))=\sum_{|\alpha|<1}a_{\alpha}\mathrm{tr}%
(\mathrm{ad}(X_{\alpha}))=0.
\]
Now, consider $g\in G^{-}$. Then $g$ can be written as
\[
g=\exp X_{1}\cdots\exp X_{r},
\]
for some $X_{1},\ldots,X_{r}\in\mathfrak{g}^{-}$, because $G^{-}$ is
connected. Therefore,
\begin{align*}
\Delta_{G}(g)  &  =\Delta_{G}(\exp X_{1}\cdots\exp X_{r})=\Delta_{G}(\exp
X_{1})\cdots\Delta_{G}(\exp X_{r})\\
&  =|\det\left(  \text{\textrm{Ad}}(\exp X_{1})\right)  |\cdots|\det\left(
\mathrm{Ad}(\exp X_{r})\right)  |\\
&  =\left\vert \det\left(  e^{\mathrm{ad}(X_{1})}\right)  \right\vert
\cdots\left\vert \det\left(  e^{\mathrm{ad}(X_{r})}\right)  \right\vert \\
&  =\left\vert e^{\mathrm{tr}(\mathrm{ad}(X_{1}))}\right\vert \cdots\left\vert
e^{\mathrm{tr}(\mathrm{ad}(X_{r}))}\right\vert =1.
\end{align*}
This shows that $(\Delta_{G})|_{G^{-}}=\Delta_{G^{-}}$ and, by \cite[Theorem
8.36]{Knapp}, the result follows.
\end{proof}

\begin{remark}
\label{remarklast} According to \cite[Theorem 8.36]{Knapp}, if $\nu_{G}$ and
$\nu_{H}$ are the left invariant Haar measures on $G$ and $H$, respectively,
the $G$-invariant Borel measure $\mu$ on $G/H$ can be normalized so that
\[
\int_{G}\phi(g)\ d\nu_{G}(g)=\int_{G/H}\int_{H}\phi(gh)\ d\nu_{H}%
(h)\ d\mu(gH),
\]
for all continuous function $\phi:G\rightarrow\mathbb{R}$ with compact
support. Note that if $K\subset G$ is compact, then $\nu_{G}(K)>0$ implies
$\mu(\pi(K))>0$. This follows from the fact that $\overline{\chi_{K}}%
(gH)=\int_{H}\chi_{K}(gh)\ d\nu_{H}(h)$ is a bounded positive function and
$\overline{\chi_{K}}>0$ if and only if $\chi_{\pi(K)}>0$.
\end{remark}

\section{Outer invariance entropy\label{Section3}}

In this section we prove our main result that provides an upper bound for
outer invariance entropy of discrete-time linear control systems on Lie
groups. We begin by recalling the definition of invariance entropy and outer
invariance entropy as given in Kawan \cite[Definitions 2.2 and 2.3]{Kawa13}.

Consider a discrete-time control system
\[
\Sigma\ :\ x_{k+1}=f(x_{k},u_{k}),\ \ u_{k}\in U,\ x\in M,
\]
with solutions $\varphi(k,x,u),k\in\mathbb{N}_{0}$. A pair $(K,Q)$ of nonempty
subsets of $M$ is called \textbf{\emph{admissible}}, if $K$ is compact and for
each $x\in K$, there exists $u\in\mathcal{U}$ such that $\varphi(k,x,u)\in Q$
for all $k\in\mathbb{N}_{0}$.

Given an admissible pair $(K,Q)$ and $n\in\mathbb{N}$, we say that a set
$\mathcal{S}\subset\mathcal{U}$ is $(n,K,Q)$-spanning if
\[
\forall\ x\in K\ \exists\ u\in\mathcal{S~}\forall\ j\in\{1,\ldots
,n\}:\ \varphi(j,x,u)\in Q.
\]
Denote by $r_{\mathrm{inv}}(n,K,Q)$ the minimal number of elements such a set
can have (if there is no finite set with this property we set $r_{\mathrm{inv}%
}(n,K,Q)=\infty$).

The existence of $(n,K,Q)$-spanning sets is guaranteed, since $\mathcal{U}$ is
$(n,K,Q)$-spanning for every $n\in\mathbb{N}$.

\begin{definition}
Given an admissible pair $(K,Q)$ for a discrete-time control system $\Sigma$,
the invariance entropy of $(K,Q)$ is defined by
\[
h_{\mathrm{inv}}(K,Q)=h_{\mathrm{inv}}(K,Q;\Sigma):=\limsup_{n\rightarrow
\infty}\frac{1}{n}\log r_{\mathrm{inv}}(n,K,Q).
\]

\end{definition}

Here, and throughout the paper, $\log$ denotes the logarithm with base $2$.

The invariance entropy of $(K,Q)$ measures the exponential growth rate of the
minimal number of control functions sufficient to stay in $Q$ when starting in
$K$ as time tends to infinity. Hence, invariance entropy is a nonnegative
(possibly infinite) quantity which is assigned to an admissible pair $(K,Q)$.
For our main result we need the following related quantity. Note that for an
admissible pair $(K,Q)$ also every pair $(K,N_{\varepsilon}(Q)),\varepsilon
>0$, is admissible, where $N_{\varepsilon}(Q)=\{x\in M\left\vert
d(x,Q)<\varepsilon\right.  \}$ denotes the $\varepsilon$-neighborhood of $Q$.

\begin{definition}
Given an admissible pair $(K,Q)$ such that $Q$ is closed in $M$ and a metric
$d$ on $M$, we define the outer invariance entropy of $(K,Q)$ by
\[
h_{\mathrm{inv,out}}(K,Q):=h_{\mathrm{inv,out}}(K,Q,\Sigma,d):=\lim
_{\varepsilon\rightarrow0}h_{\mathrm{inv}}(K,N_{\varepsilon}(Q))=\sup
_{\varepsilon>0}h_{\mathrm{inv}}(K,N_{\varepsilon}(Q)).
\]

\end{definition}

Obviously, the inequality $h_{\mathrm{inv}}(K,Q)\geq h_{\mathrm{inv,out}%
}(K,Q)$ holds. The next proposition (cf. Kawan \cite[Proposition 2.13]%
{Kawa13}) describes the behavior of outer invariance entropy under
semi-conjugacy. Recall that a semi-conjugacy of two discrete-time control
systems on metric spaces $M_{1}$ and $M_{2}$, respectively, given by%
\[
x_{k+1}=f_{1}(x_{k},u_{k}),u_{k}\in U\text{, and }\ y_{k+1}=f_{2}(y_{k}%
,v_{k}),V_{k}\in V,
\]
with solutions $\varphi_{1}$ and $\varphi_{2}$, resp., is a pair $(\pi,h)$ of
continuous maps $\pi:M_{1}\rightarrow M_{2}$ and $h:U^{\mathbb{N}_{0}%
}\rightarrow V^{\mathbb{N}_{0}}$ such that%
\begin{equation}
\pi(\varphi_{1}(k,x,u))=\varphi_{2}(k,\pi(x),h(u))\text{ for all }%
k\in\mathbb{N}_{0},\ x\in M,u\in U^{\mathbb{N}_{0}}. \label{conj2}%
\end{equation}

\begin{proposition}
\label{proposition9}Consider two discrete-time control systems $\Sigma_{1}$
and $\Sigma_{2}$ and let $(\pi,h)$ be a semi-conjugacy from $\Sigma_{1}$ to
$\Sigma_{2}$. Then every admissible pair $(K,Q)$ for $\Sigma_{1}$ with compact
$Q$ defines an admissible pair $(\pi(K),\pi(Q))$ for $\Sigma_{2}$ and
\[
h_{\mathrm{inv,out}}(K,Q;\Sigma_{1})\geq h_{\mathrm{inv,out}}(\pi
(K),\pi(Q);\Sigma_{2}).
\]

\end{proposition}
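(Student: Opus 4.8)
The plan is to follow the standard argument for the behavior of (outer) invariance entropy under semi-conjugacy, adapted to the discrete-time setting. First I would verify admissibility of $(\pi(K),\pi(Q))$ for $\Sigma_{2}$: since $\pi$ is continuous and $K$ is compact, $\pi(K)$ is compact; and for $x\in K$ choosing $u\in\mathcal{U}$ with $\varphi_{1}(k,x,u)\in Q$ for all $k$, the semi-conjugacy relation (\ref{conj2}) gives $\varphi_{2}(k,\pi(x),h(u))=\pi(\varphi_{1}(k,x,u))\in\pi(Q)$ for all $k$, so $(\pi(K),\pi(Q))$ is admissible. Since $Q$ is compact, $\pi(Q)$ is compact, hence closed, so the outer invariance entropy $h_{\mathrm{inv,out}}(\pi(K),\pi(Q);\Sigma_{2})$ is defined.

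Next I would bound the spanning numbers. Fix $\varepsilon>0$; the key point is that $\pi$ being continuous on the compact set $\overline{N_{1}(Q)}$ (say) is uniformly continuous there, so there is $\delta=\delta(\varepsilon)>0$ with $\pi(N_{\delta}(Q))\subset N_{\varepsilon}(\pi(Q))$. Now take any $(n,K,N_{\delta}(Q))$-spanning set $\mathcal{S}\subset\mathcal{U}$ for $\Sigma_{1}$ of minimal cardinality $r_{\mathrm{inv}}(n,K,N_{\delta}(Q))$. I claim $h(\mathcal{S})\subset V^{\mathbb{N}_{0}}$ is $(n,\pi(K),N_{\varepsilon}(\pi(Q)))$-spanning for $\Sigma_{2}$: given $y\in\pi(K)$ write $y=\pi(x)$ with $x\in K$, pick $u\in\mathcal{S}$ with $\varphi_{1}(j,x,u)\in N_{\delta}(Q)$ for $j=1,\dots,n$, and then by (\ref{conj2}) and the choice of $\delta$, $\varphi_{2}(j,y,h(u))=\pi(\varphi_{1}(j,x,u))\in\pi(N_{\delta}(Q))\subset N_{\varepsilon}(\pi(Q))$. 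Hence $r_{\mathrm{inv}}(n,\pi(K),N_{\varepsilon}(\pi(Q)))\leq\#h(\mathcal{S})\leq r_{\mathrm{inv}}(n,K,N_{\delta}(Q))$.

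Taking $\frac{1}{n}\log$ and $\limsup_{n\to\infty}$ yields
\[
h_{\mathrm{inv}}(\pi(K),N_{\varepsilon}(\pi(Q));\Sigma_{2})\leq h_{\mathrm{inv}}(K,N_{\delta}(Q);\Sigma_{1})\leq h_{\mathrm{inv,out}}(K,Q;\Sigma_{1}),
\]
where the last inequality holds because $h_{\mathrm{inv,out}}(K,Q;\Sigma_{1})=\sup_{\delta'>0}h_{\mathrm{inv}}(K,N_{\delta'}(Q);\Sigma_{1})$. Finally, letting $\varepsilon\to0$ and using $h_{\mathrm{inv,out}}(\pi(K),\pi(Q);\Sigma_{2})=\lim_{\varepsilon\to0}h_{\mathrm{inv}}(\pi(K),N_{\varepsilon}(\pi(Q));\Sigma_{2})$ gives the desired inequality.

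The main technical obstacle is the uniform-continuity step producing $\delta$ from $\varepsilon$: one must be careful that $\pi$ is only continuous, not uniformly so on all of $M_{1}$, so the argument should be localized to a compact neighborhood of $Q$ (e.g. $\overline{N_{\varepsilon_{0}}(Q)}$ for some fixed $\varepsilon_{0}$, which is compact when $Q$ is compact), and one works with $\delta\le\varepsilon_{0}$; everything else is bookkeeping with the definitions and the semi-conjugacy identity. One should also note at the outset that compactness of $Q$ is used twice — to get compactness (hence closedness) of $\pi(Q)$ so that $h_{\mathrm{inv,out}}(\pi(K),\pi(Q);\Sigma_{2})$ is well defined, and to ensure the relevant neighborhoods of $Q$ have compact closure.
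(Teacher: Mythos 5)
Your argument is correct and is essentially the standard proof of this fact; the paper itself gives no proof, deferring to Kawan's book (Proposition 2.13), and your write-up reconstructs exactly that argument: admissibility of $(\pi(K),\pi(Q))$ via the conjugacy identity, the uniform-continuity step producing $\delta(\varepsilon)$ with $\pi(N_{\delta}(Q))\subset N_{\varepsilon}(\pi(Q))$, the bound $r_{\mathrm{inv}}(n,\pi(K),N_{\varepsilon}(\pi(Q)))\leq r_{\mathrm{inv}}(n,K,N_{\delta}(Q))$ via the image controls $h(\mathcal{S})$, and the two limit passages. The only point to state explicitly is that compactness of $\overline{N_{\varepsilon_{0}}(Q)}$ for some $\varepsilon_{0}>0$ is not automatic from compactness of $Q$ in an arbitrary metric space but requires local compactness of the state space (guaranteed here since the systems live on Lie groups and their homogeneous spaces; this is the content of Kawan's Lemma A.3, which the paper invokes elsewhere for the same purpose).
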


Now, we will briefly recall the notion of topological entropy as defined by
Bowen and Dinaburg (cf. e.g. Walters \cite{Walt82}). Let $(M,d)$ be a metric
space and $\phi:M\rightarrow M$ be a continuous map. Given a compact set
$K\subset X$ and $n\in\mathbb{N}$ we say that a set $F\subset M$ is
$(n,\varepsilon)$-spanning set for $K$ with respect to $\phi$ if, for every
$y\in K$, there exists $x\in F$ such that
\[
d(\phi^{j}(x),\phi^{j}(y))<\varepsilon,\ \text{ for all }\ j\in\{0,1,\ldots
,n\}.
\]
If we denote by $r_{n}(\varepsilon,K)$ the minimal cardinality of an
$(n,\varepsilon)$-spanning set for $K$ with respect to $\phi$, the topological
entropy of $\phi$ over $K$ is defined by
\[
h_{\mathrm{top}}(\phi,K)=\lim_{\varepsilon\rightarrow0}\limsup_{n\rightarrow
\infty}\frac{1}{n}\log r_{n}(\varepsilon,K)
\]
and the topological entropy of $\phi$ is
\[
h_{\mathrm{top}}(\phi)=\sup_{K\text{ compact}}h_{\mathrm{top}}(\phi,K).
\]
Alternatively, topological entropy can be defined using $(n,\varepsilon
)$-separated sets $E\subset K$ with respect to $\phi$ which are defined as
sets satisfying
\[
\forall\ x,y\in E:x\neq y\Rightarrow\exists j\in\{0,\ldots,n\}:\ d(\phi
^{j}(x),\phi^{j}(y))>\varepsilon.
\]
Let $s_{n}(\varepsilon,K)$ denote the largest cardinality of any
$(n,\varepsilon)$-separated subset of $K$ with respect to $\phi$. If $E$ is an
$(n,\varepsilon)$-separated subset of $K$ of maximal cardinality, then $E$ is
$(n,\varepsilon)$-spanning for $K$. The topological entropy of $\phi$ over $K$
can also be characterized as the limit
\[
h_{\mathrm{top}}(\phi,K)=\lim_{\varepsilon\rightarrow0}\limsup_{n\rightarrow
\infty}\frac{1}{n}\log s_{n}(\varepsilon,K).
\]
Bowen \cite[Corollary 16]{Bow} shows that for$\text{ an endomorphism }%
\phi\text{ of a }$Lie group $G$ the topological entropy is
\begin{equation}
h_{\mathrm{top}}(\phi)=\sum_{|\lambda|>1}\log|\lambda|, \label{Bow1}%
\end{equation}
where the sum is taken over all eigenvalues $\lambda$ of $d(\phi)_{e}$ with
$|\lambda|>1$.

Now we formulate the main result of this paper.

\begin{theorem}
\label{Theo11} Let $(K,Q)$ be an admissible pair for the discrete-time linear
system $\Sigma:g_{k+1}=f(g_{k},u_{k})$, $u_{k}\in U$, on a connected Lie group
$G$. Assume that $Q$ is compact.

\begin{itemize}
\item[i)] Then the outer invariance entropy satisfies
\[
h_{\mathrm{inv,out}}(K,Q)\leq\sum_{|\lambda|>1}\log|\lambda|=h_{\mathrm{top}%
}(f_{0}),
\]
where the sum is taken over all eigenvalues $\lambda$ of $(df_{0})_{e}$ with
$|\lambda|>1$.

\item[ii)] Moreover, suppose that the connected subgroup $G^{-}$ corresponding
to $\mathfrak{g}^{-}$ defined in (\ref{g}) is closed and that $\nu_{G}(K)>0$
for a left invariant Haar measure $\nu_{G}$ on $G$. Then
\[
h_{\mathrm{inv,out}}(K,Q)=\sum_{\lambda}\log|\lambda|=h_{\mathrm{top}}%
(f_{0}),
\]
where the sum is taken over all eigenvalues $\lambda$ of $(df_{0})_{e}$ with
$|\lambda|>1$.
\end{itemize}
\end{theorem}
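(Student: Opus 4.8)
The plan is to get the upper bound in (i) by comparing $(n,K,N_\varepsilon(Q))$-spanning sets with Bowen--Dinaburg $(n,\varepsilon)$-spanning sets for the automorphism $f_0$, and the matching lower bound in (ii) by a volume-growth estimate for the induced linear system on $G/G^{-}$. For (i), I would first fix a left-invariant Riemannian distance $d$ on $G$; since $Q$ is compact, $h_{\mathrm{inv,out}}(K,Q)$ is unchanged under this choice. The key observation is that by the solution formula (\ref{solution}), for any control $u$ and any $g,g'\in G$ one has $\varphi(j,g,u)=L_{\varphi_{j,u}(e)}(f_0^{j}(g))$ and $\varphi(j,g',u)=L_{\varphi_{j,u}(e)}(f_0^{j}(g'))$, so left invariance of $d$ gives $d(\varphi(j,g,u),\varphi(j,g',u))=d(f_0^{j}(g),f_0^{j}(g'))$. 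Now take $F\subset K$ a maximal $(n,\varepsilon)$-separated subset of $K$ with respect to $f_0$, so $|F|=s_n(\varepsilon,K)$ and $F$ is $(n,\varepsilon)$-spanning for $K$, and by admissibility pick for each $x\in F$ a control $u_x$ with $\varphi(k,x,u_x)\in Q$ for all $k$. Given $g\in K$, choose $x\in F$ with $d(f_0^{j}(x),f_0^{j}(g))<\varepsilon$ for $0\le j\le n$; then $d(\varphi(j,g,u_x),\varphi(j,x,u_x))<\varepsilon$ with $\varphi(j,x,u_x)\in Q$, so $\varphi(j,g,u_x)\in N_\varepsilon(Q)$ for $1\le j\le n$. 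Hence $\{u_x:x\in F\}$ is $(n,K,N_\varepsilon(Q))$-spanning and $r_{\mathrm{inv}}(n,K,N_\varepsilon(Q))\le s_n(\varepsilon,K)$; dividing by $n$, taking $\limsup_{n\to\infty}$ and then $\varepsilon\to0$ yields $h_{\mathrm{inv,out}}(K,Q)\le h_{\mathrm{top}}(f_0,K)\le h_{\mathrm{top}}(f_0)=\sum_{|\lambda|>1}\log|\lambda|$ by (\ref{Bow1}).

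For (ii), since $G^{-}$ is closed and $\mathfrak{g}=\mathfrak{g}^{-}\oplus\mathfrak{g}^{+,0}$ with both summands $(df_0)_e$-invariant, Section~\ref{Section2a} produces the induced linear system $\bar{\Sigma}$ on $G/G^{-}$, and Proposition~\ref{proposition9} reduces everything to showing $h_{\mathrm{inv,out}}(\pi(K),\pi(Q);\bar{\Sigma})\ge\sum_{|\lambda|>1}\log|\lambda|$. Let $\mu$ be the $G$-invariant Radon measure on $G/G^{-}$ from Proposition~\ref{borel}; by Remark~\ref{remarklast}, $\nu_G(K)>0$ and compactness of $K$ force $\mu(\pi(K))>0$. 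The coset $eG^{-}$ is fixed by $\bar{f}_0$, and by Proposition~\ref{decomp} the differential $(d\bar{f}_0)_{eG^{-}}$ is identified with $(df_0)_e|_{\mathfrak{g}^{+,0}}$, whose eigenvalues (with multiplicity) are exactly the $\lambda$ of $(df_0)_e$ with $|\lambda|\ge1$; set $\theta:=\prod_{|\lambda|\ge1}|\lambda|=\prod_{|\lambda|>1}|\lambda|\ge1$. From the relation $\bar{f}_0\circ\mathcal{L}_h=\mathcal{L}_{f_0(h)}\circ\bar{f}_0$ one checks that $(\bar{f}_0)_*\mu$ is again $G$-invariant, hence $(\bar{f}_0)_*\mu=c\,\mu$ for some $c>0$ by the uniqueness in Proposition~\ref{borel}; comparing the densities of $\mu$ and $(\bar{f}_0)_*\mu$ in a chart around the fixed point $eG^{-}$ gives $c=|\det(d\bar{f}_0)_{eG^{-}}|^{-1}=\theta^{-1}$. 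Therefore $\mu(\bar{f}_0^{n}(B))=\theta^n\mu(B)$, and since $\bar{\varphi}(n,\cdot,u)=\mathcal{L}_{\varphi_{n,u}(e)}\circ\bar{f}_0^{n}$ by (\ref{linear2}) and $\mathcal{L}_g$ preserves $\mu$, we obtain $\mu(\bar{\varphi}(n,B,u))=\theta^n\mu(B)$ for every Borel $B\subset G/G^{-}$ and every control $u$.

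Then I would run the volume count. Fix $\varepsilon>0$ small enough that $\overline{N_\varepsilon(\pi(Q))}$ is compact, so $V_\varepsilon:=\mu(N_\varepsilon(\pi(Q)))<\infty$, and take a minimal $(n,\pi(K),N_\varepsilon(\pi(Q)))$-spanning set $\mathcal{S}$. For $u\in\mathcal{S}$ the set $K_u:=\{x\in\pi(K):\bar{\varphi}(j,x,u)\in N_\varepsilon(\pi(Q)),\ 1\le j\le n\}$ is Borel, $\pi(K)=\bigcup_{u\in\mathcal{S}}K_u$, and $\bar{\varphi}(n,K_u,u)\subset N_\varepsilon(\pi(Q))$, so $\theta^n\mu(K_u)=\mu(\bar{\varphi}(n,K_u,u))\le V_\varepsilon$. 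Summing, $0<\mu(\pi(K))\le\sum_{u\in\mathcal{S}}\mu(K_u)\le|\mathcal{S}|\,\theta^{-n}V_\varepsilon$, whence $r_{\mathrm{inv}}(n,\pi(K),N_\varepsilon(\pi(Q)))\ge\theta^n\mu(\pi(K))/V_\varepsilon$; dividing by $n$ and taking $\limsup_{n\to\infty}$ gives $h_{\mathrm{inv}}(\pi(K),N_\varepsilon(\pi(Q)))\ge\log\theta$, and $\varepsilon\to0$ yields $h_{\mathrm{inv,out}}(\pi(K),\pi(Q);\bar{\Sigma})\ge\log\theta=\sum_{|\lambda|>1}\log|\lambda|$. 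Combined with Proposition~\ref{proposition9} and part (i), this gives the claimed equality.

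The hard part will be the measure-scaling identity $\mu(\bar{f}_0^{n}(B))=\theta^n\mu(B)$: one must verify that the $G$-invariant measure $\mu$ has a well-defined, positive, continuous density in a chart around the fixed point $eG^{-}$ so that uniqueness in Proposition~\ref{borel} really pins down the constant $c$ to be the reciprocal of the local Jacobian $|\det(d\bar{f}_0)_{eG^{-}}|=\theta$, and one must carefully track the finiteness/positivity bookkeeping ($V_\varepsilon<\infty$ for small $\varepsilon$, $\mu(\pi(K))>0$) on which the counting estimate rests. The remaining ingredients — the comparison with topological entropy in (i) and the applications of Propositions~\ref{proposition9}, \ref{decomp}, \ref{borel} and Remark~\ref{remarklast} — should then be routine.
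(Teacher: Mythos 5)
Your proposal is correct and follows essentially the same route as the paper: part (i) via maximal $(n,\varepsilon)$-separated sets for $f_0$ together with Bowen's formula, and part (ii) via the semi-conjugacy onto $G/G^{-}$ and a volume-growth count for the $G$-invariant measure. The only (minor) deviation is that you obtain the scaling $\mu(\bar{f}_0^{n}(B))=\theta^{n}\mu(B)$ from uniqueness of the invariant measure plus a density comparison at the fixed coset, whereas the paper gets it directly from the change-of-variables formula after showing $|\det(d\bar{f}_0^{n})_{gG^{-}}|$ is independent of $g$; both rest on the same smoothness of $\mu$ and on Proposition~\ref{decomp}.
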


\begin{proof}
(i) The characterization of the topological entropy of the automorphism
$f_{0}$ follows from Bowen's result (\ref{Bow1}). Since $Q$ is compact, Kawan
\cite[Proposition 2.5]{Kawa13}\ implies that the invariance entropy does not
depend on the metric. Hence we may choose a left invariant Riemannian metric
$\varrho$ on $G$. This yields for $g\in G$,
\[
\varrho(\varphi_{k,u}(g),\varphi_{k,u}(h))=\varrho(\varphi_{k,u}(e)f_{0}%
^{k}(g),\varphi_{k,u}(e)f_{0}^{k}(h))=\varrho(f_{0}^{k}(g),f_{0}^{k}(h)).
\]
Now, fix $\varepsilon>0$ and $n\in\mathbb{N}$. Let $E\subset K$ be an
$(n,\varepsilon)$-separated subset of $K$ with respect to $f_{0}$ of maximal
cardinality $s_{n}(\varepsilon,K)$. Since $(K,Q)$ is admissible, for each
$h\in E$, there exists $u_{h}\in\mathcal{U}$ such that $\varphi(k,h,u_{h})\in
Q$ for $k=1,\ldots,n$. We claim that $\mathcal{S}:=\{u_{h}\in\mathcal{U}%
;\ h\in E\}$ is an $(n,K,N_{\varepsilon}(Q))$-spanning set. In fact, since the
set $E$ is also $(n,\varepsilon)$-spanning, one finds for all $g\in K$ an
element $h\in E$ with
\[
\varrho(\varphi_{k,u_{h}}(g),\varphi_{k,u_{h}}(h))=\varrho(f_{0}^{k}%
(g),f_{0}^{k}(h))<\varepsilon
\]
for all $k\in\{0,\ldots,n\}$. This shows that $\varphi_{k,u_{h}}(g)\in
N_{\varepsilon}(Q)$ and hence proves the claim. It follows that
$r_{\mathrm{inv}}(n,K,N_{\varepsilon}(Q))\leq s_{n}(\varepsilon,K)$ implying
\[
h_{\mathrm{inv,out}}(K,N_{\varepsilon}(Q))=\limsup_{n\rightarrow\infty}%
\frac{1}{n}\log r_{\mathrm{inv}}(n,K,N_{\varepsilon}(Q))\leq\limsup
_{n\rightarrow\infty}\frac{1}{n}\log s_{n}(\varepsilon,K).
\]
Letting $\varepsilon$ tend to $0$, one finds by formula (\ref{Bow1}) for the
topological entropy
\[
h_{\mathrm{inv,out}}(K,Q)\leq h_{\mathrm{top}}(f_{0})=\sum_{|\lambda|>1}%
\log|\lambda|,
\]
where the sum is taken over all eigenvalues $\lambda$ of $(df_{0})_{e}$ with
$|\lambda|>1$.

(ii) It remains to show the reverse inequality. Recall that $(\pi
,\text{id}_{\mathcal{U}})$ is a semi-conjugacy between system
(\ref{lineardisc}) and the induced system (\ref{induced}) with $H=G^{-}$.
Hence Proposition \ref{proposition9} implies%
\[
h_{\mathrm{inv}}(K,N_{\varepsilon}(Q))\geq h_{\mathrm{inv}}(\pi
(K),N_{\varepsilon}(\pi(Q))),
\]
and it suffices to estimate the right hand side. By Kawan \cite[Lemma
A.3]{Kawa13}, there is $\varepsilon>0$ small enough such that $\overline
{N_{\varepsilon}(\pi(Q))}$ is compact. The $G$-invariant Borel measure $\mu$
on $G/G^{-}$ whose existence is guaranteed by Proposition \ref{borel}
satisfies $\mu(N_{\varepsilon}(\pi(Q)))\leq\mu(\overline{N_{\varepsilon}%
(\pi(Q))})<\infty$, because this measure is finite on compact sets. Consider
for $n\in\mathbb{N}$ an $(n,\pi(K),N_{\varepsilon}(\pi(Q)))$-spanning set
$\mathcal{S}=\{u_{1},\ldots,u_{r}\}$ with minimal cardinality $r=r_{\emph{inv}%
}(n,\pi(K),N_{\varepsilon}(\pi(Q)))$. Define for $j\in\{1,\ldots,r\},$
\[
K_{j}:=\{gG^{-}\in\pi(K);\ \bar{\varphi}(k,gG^{-},u_{j})\in N_{\varepsilon
}(\pi(Q)),\ \forall\ k=0,\ldots,n\}.
\]
Each $K_{j}$ is a Borel set and these sets cover $\pi(K)$ by the choice of
$\mathcal{S}$. Since for each $u\in U$ the map $\bar{f}_{u}$ and hence also
$\bar{\varphi}_{n,u_{j}}$ are diffeomorphisms on $G/G^{-}$, the set
$\bar{\varphi}_{n,u_{j}}(K_{j})$ is a Borel set. The inclusion $\bar{\varphi
}_{n,u_{j}}(K_{j})\subset N_{\varepsilon}(\pi(Q))$ implies
\[
\mu(\bar{\varphi}_{n,u_{j}}(K_{j}))\leq\mu(N_{\varepsilon}(\pi(Q)))<\infty.
\]
Using the left invariance of $\mu$ equality (\ref{linear2}) yields
\[
\mu(\bar{\varphi}_{n,u_{j}}(K_{j}))=\mu(\bar{\varphi}_{n,u_{j}}(e)\bar{f}%
_{0}^{n}(K_{j}))=\mu(\bar{f}_{0}^{n}(K_{j})).
\]
Since $\bar{f}_{0}$ is an diffeomorphism, $\bar{f}_{0}\circ\mathcal{L}%
_{g}=\mathcal{L}_{f_{0}(g)}\circ\bar{f}_{0}$ and $|\det d(\mathcal{L}%
_{g})_{hG^{-}}|=1$ for all $g,h\in G$, we obtain
\begin{align*}
|\det(d\bar{f}_{0}^{n})_{gG^{-}}|  &  =|\det d(\mathcal{L}_{f_{0}^{n}%
(g)})_{eG^{-}}||\det[(d\bar{f}_{0}^{n})_{eG^{-}}]||\det d(\mathcal{L}_{g^{-1}%
})_{gG^{-}}|\\
&  =|\det(d\bar{f}_{0}^{n})_{eG^{-}}|=\left\vert \det\left(  (df_{0})_{e}%
^{n}|_{\mathfrak{g}^{+,0}}\right)  \right\vert ,
\end{align*}
where the last equality follows from Proposition \ref{decomp}.

By the left invariance of $\mu$ we have
\begin{align*}
\mu(\bar{f}_{0}^{n}(K_{j}))  &  =\int_{\bar{f}_{0}^{n}(K_{j})}d\mu
(gG^{-})=\int_{K_{j}}|\det(d\bar{f}_{0}^{n})_{gG^{-}}|d\mu(gG^{-})\\
&  =\left\vert \det\left(  (df_{0})_{e}^{n}|_{\mathfrak{g}^{+,0}}\right)
\right\vert \int_{K_{j}}d\mu(gG^{-})=\left\vert \det\left(  (df_{0}%
)_{e}|_{\mathfrak{g}^{+,0}}\right)  \right\vert ^{n}\mu(K_{j})
\end{align*}
Together, these relations yield
\begin{align*}
\mu(\pi(K))  &  \leq\sum_{j=1}^{r}\mu(K_{j})\leq r\max_{1\leq j\leq r}%
\mu(K_{j})=r\max_{1\leq j\leq r}\dfrac{\mu(\bar{f}_{0}^{n}(K_{j}))}{\left\vert
\det\left(  (df_{0})_{e}|_{\mathfrak{g}^{+,0}}\right)  \right\vert ^{n}}\\
&  =r\max_{1\leq j\leq r}\dfrac{\mu(\bar{\varphi}_{n,u_{j}}(K_{j}%
))}{\left\vert \det\left(  (df_{0})_{e}|_{\mathfrak{g}^{+,0}}\right)
\right\vert ^{n}}\leq r\dfrac{\mu(N_{\varepsilon}(\pi(Q)))}{\left\vert
\det\left(  (df_{0})_{e}|_{\mathfrak{g}^{+,0}}\right)  \right\vert ^{n}},
\end{align*}
which implies that
\[
r_{\mathrm{inv}}(n,\pi(K),N_{\varepsilon}(\pi(Q)))=r\geq\dfrac{\mu(\pi
(K))}{\mu(N_{\varepsilon}(\pi(Q)))}\left\vert \det\left(  (df_{0}%
)_{e}|_{\mathfrak{g}^{+,0}}\right)  \right\vert ^{n}.
\]
Note that $\mu(\pi(K))>0$, because $\nu_{G}(K)>0$ by hypothesis (see Remark
\ref{remarklast}). Denoting $C:=\dfrac{\mu(\pi(K))}{\mu(N_{\varepsilon}%
(Q))}>0$ we get
\begin{align*}
h_{\mathrm{inv}}(\pi(K),N_{\varepsilon}(\pi(Q)))  &  \geq\limsup
_{n\rightarrow\infty}\dfrac{1}{n}\log\left(  C\left\vert \det\left(
(df_{0})_{e}|_{\mathfrak{g}^{+,0}}\right)  \right\vert ^{n}\right) \\
&  =\limsup_{n\rightarrow\infty}\dfrac{1}{n}\left(  \log C+n\log\left\vert
\det\left(  (df_{0})_{e}|_{\mathfrak{g}^{+,0}}\right)  \right\vert ^{n}\right)
\\
&  =\log\prod_{|\lambda|\geq1}|\lambda|=\sum_{|\lambda|>1}\log|\lambda|,
\end{align*}
where the product is taken over all eigenvalues $\lambda$ of $(df_{0})_{e}$
with $|\lambda|\geq1$ and the sum is taken over all eigenvalues $\lambda$ of
$(df_{0})_{e}$ with $|\lambda|>1$. Taking $\varepsilon\searrow0$ the result follows.
\end{proof}

\begin{remark}
The formula presented in Theorem \ref{Theo11} (ii) holds for the class of
solvable, connected and simply connected Lie groups $G$. In fact, by San
Martin \cite[Proposion 10.6]{SM16} all connected subgroups of $G$ are closed,
hence $G^{-}$ is closed. In Section \ref{Section5} we show that $G^{-}$ is
still closed even when $G$ is not solvable (see Theorem \ref{theo23}) and
Theorem \ref{Theo11} (ii) can be applied for discrete-time linear systems on a
broader class of Lie groups.
\end{remark}

\begin{example}
Let $(K,Q)$ be an admissible pair for the system defined in Example
\ref{example5}, where $Q$ is compact and $K$ has positive Haar measure. Note
that the matrix of the differential of $f_{0}$ is given by
\[
(df_{0})_{(1,0)}=\left[
\begin{array}
[c]{ccc}%
1 & 0 & \\
0 & e^{2} &
\end{array}
\right]  .
\]
In this case, $\mathfrak{g}^{-}=\{0\}$ and $G^{-}=\{(1,0)\}$, which is closed.
By Theorem \ref{Theo11} (ii), $h_{\mathrm{inv,out}}(K,Q)=2$.
\end{example}

\begin{example}
Again, consider an admissible pair $(K,Q)$ of the system presented in Example
\ref{Ex4}. Assume that $Q$ is compact and $K$ has positive Haar measure. The
matrix of $(df_{0})_{e}$ in the canonical basis is
\[
\left[
\begin{array}
[c]{ccc}%
1 & 1 & 0\\
0 & 1 & 0\\
0 & 1 & 1
\end{array}
\right]
\]
and we can see that the unique eigenvalue of $(df_{0})_{e}$ is $1$, hence
$h_{\mathrm{inv,out}}(K,Q)=0$ by Theorem \ref{Theo11}.
\end{example}

\begin{remark}
For linear control systems in Euclidean space, cf. Example \ref{Example2},
natural candidates for admissible pairs $(K,Q)$ such that $Q$ has compact
closure, as considered in Theorem \ref{Theo11}, can be obtained by Colonius,
Cossich and Santana \cite[Theorem 32]{Cocosa4} as follows: If $A$ is
hyperbolic, there exists a unique control set $D$ with nonvoid interior (i.e.,
a maximal set of approximate controllability with $\mathrm{int}D\not =%
\varnothing$) and it is bounded. Hence its closure $\overline{D}$ is compact
and for any compact subset $K\subset Q:=\overline{D}\,$\ the pair $(K,Q)$ is
admissible with compact $Q$. Results on control sets for continuous-time
linear systems on Lie groups are proved in Ayala, Da Silva, and \ Zsigmond
\cite{AyalDSZ17} and Ayala, Da Silva, Philippe, and Zsigmond \cite{AySilJuZs}.
For discrete-time linear systems on Lie groups, the control sets have not be studied.
\end{remark}

\section{Closedness of the stable subgroup\label{Section5}}

The formula for the outer invariance entropy in Theorem \ref{Theo11} has been
derived under the assumption that the stable subgroup $G^{-}$ is closed. In
this section we provide a sufficient condition for this property. The
arguments can also be applied to the unstable subgroup $G^{+}$. First we show
that these subgroups are simply connected.

\begin{proposition}
The stable subgroup $G^{-}$ and the unstable subgroup $G^{+}$ are simply connected.
\end{proposition}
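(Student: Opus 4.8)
The claim is that the connected Lie subgroups $G^{-}$ and $G^{+}$ corresponding to the nilpotent Lie subalgebras $\mathfrak{g}^{-}$ and $\mathfrak{g}^{+}$ are simply connected. The plan is to exploit the fact established earlier (as a consequence of Proposition \ref{dynliealg}) that $\mathfrak{g}^{-}$ and $\mathfrak{g}^{+}$ are \emph{nilpotent} Lie algebras, and to recall the structural fact that a connected, simply connected nilpotent Lie group is diffeomorphic, via $\exp$, to its Lie algebra. The subtlety is that $G^{-}$ is a priori only an immersed (not necessarily embedded, not necessarily simply connected) subgroup of $G$, so one cannot immediately invoke that classification; one has to argue intrinsically about $G^{-}$ as an abstract connected Lie group whose Lie algebra is nilpotent.

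First I would recall that $G^{-}$ (resp. $G^{+}$) is by definition the connected Lie subgroup of $G$ with Lie algebra $\mathfrak{g}^{-}$ (resp. $\mathfrak{g}^{+}$). Let $\widetilde{G^{-}}$ be the universal cover of the abstract Lie group $G^{-}$; it is a connected, simply connected Lie group with the same Lie algebra $\mathfrak{g}^{-}$. Since $\mathfrak{g}^{-}$ is nilpotent (by Proposition \ref{dynliealg}, as noted after formula (\ref{g})), the standard structure theory of nilpotent Lie groups — see e.g. Knapp \cite[Theorem 1.127]{Knapp} or San Martin \cite{SM16} — gives that the exponential map $\exp:\mathfrak{g}^{-}\rightarrow\widetilde{G^{-}}$ is a diffeomorphism. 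In particular $\widetilde{G^{-}}$ is diffeomorphic to a Euclidean space. The covering map $p:\widetilde{G^{-}}\rightarrow G^{-}$ is a local diffeomorphism, and $p\circ\exp_{\widetilde{G^{-}}} = \exp_{G^{-}}$; since $\exp_{\widetilde{G^{-}}}$ is surjective, so is $\exp_{G^{-}}$, and one shows $\exp_{G^{-}}$ is also injective by comparing the kernels. The key point is that the kernel of $p$ is a discrete central subgroup of $\widetilde{G^{-}}$; I would argue that for a simply connected nilpotent group any nontrivial discrete central subgroup would obstruct $\exp_{G^{-}}$ from being injective, and conversely that injectivity of $\exp$ on $\mathfrak{g}^{-}$ (inherited from $\widetilde{G^{-}}$ together with the fact that distinct one-parameter subgroups in $\widetilde{G^{-}}$ stay distinct under $p$ near the identity) forces $\ker p = \{e\}$. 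Hence $p$ is an isomorphism and $G^{-}$ is simply connected. The same argument verbatim applies to $G^{+}$ with $\mathfrak{g}^{+}$ in place of $\mathfrak{g}^{-}$.

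The cleanest route, which I would actually write, avoids the covering-space bookkeeping: since $\mathfrak{g}^{-}$ is nilpotent, apply the Malcev/structure theorem directly to the connected Lie group $G^{-}$ itself. A connected nilpotent Lie group is simply connected if and only if its exponential map is a diffeomorphism onto; more usefully, \emph{every} connected Lie group with nilpotent Lie algebra has exponential map that is a covering onto its image, and one checks that $\exp_{G^{-}}:\mathfrak{g}^{-}\rightarrow G^{-}$ is injective because the Baker--Campbell--Hausdorff formula on the nilpotent algebra $\mathfrak{g}^{-}$ expresses the group product as a polynomial map on $\mathfrak{g}^{-}$, so $(\mathfrak{g}^{-},\ast_{\mathrm{BCH}})$ is itself a connected simply connected Lie group with Lie algebra $\mathfrak{g}^{-}$, which therefore must coincide (up to isomorphism) with the unique such group; since $G^{-}$ is connected with the same Lie algebra it is a quotient of it by a discrete central subgroup, and because the product is globally polynomial there is no room for a nontrivial such subgroup. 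I expect the main obstacle to be purely expository: stating precisely enough why "immersed connected subgroup with nilpotent Lie algebra" forces simple connectedness, i.e. citing the right version of the nilpotent structure theorem and being careful that it is a statement about the abstract group $G^{-}$ and not about its embedding in $G$. Once that citation is in place, the argument is immediate and the remark about $G^{+}$ follows by symmetry.
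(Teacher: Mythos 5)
There is a genuine gap, and it sits exactly at the point you flag as "the main obstacle." Your argument ultimately rests on the claim that a connected Lie group with nilpotent Lie algebra must be simply connected (equivalently, that the discrete central subgroup $\ker p\subset\widetilde{G^{-}}$ must be trivial "because the product is globally polynomial"). That claim is false: the circle $\mathbb{R}/\mathbb{Z}$, any torus, and the Heisenberg group modulo its integer center are connected nilpotent Lie groups that are not simply connected, even though in each case the BCH product on the Lie algebra is polynomial and the simply connected model is diffeomorphic to Euclidean space. Your first route has the same circularity: you say injectivity of $\exp_{G^{-}}$ is "inherited from $\widetilde{G^{-}}$," but $\exp_{G^{-}}=p\circ\exp_{\widetilde{G^{-}}}$, and $p$ is exactly the map whose injectivity is in question. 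So nilpotency alone, as an abstract structural hypothesis on $G^{-}$, cannot yield the proposition; some additional input is required.

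The input the paper uses is dynamical, and it is entirely absent from your proposal. Since $f_{0}$ is an automorphism, $f_{0}^{n}(\exp X)=\exp\bigl([(df_{0})_{e}]^{n}X\bigr)$ (Remark \ref{Obs5}), and by Remark \ref{Obs12} the restriction of $(df_{0})_{e}$ to $\mathfrak{g}^{-}$ is an eventual contraction: $|[(df_{0})_{e}]^{n}Z|\leq c^{-1}\sigma^{n}|Z|$ with $\sigma\in(0,1)$. Given $X,Y\in\mathfrak{g}^{-}$ with $\exp X=\exp Y$, applying $f_{0}^{n}$ pushes both $[(df_{0})_{e}]^{n}X$ and $[(df_{0})_{e}]^{n}Y$ into a neighborhood $V$ of $0$ on which $\exp$ is injective, forcing $[(df_{0})_{e}]^{n}X=[(df_{0})_{e}]^{n}Y$ and hence $X=Y$ since $(df_{0})_{e}$ is invertible. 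Combined with the fact that $\exp^{-}:\mathfrak{g}^{-}\rightarrow G^{-}$ is a covering map (here nilpotency \emph{is} used), injectivity makes $\exp^{-}$ a homeomorphism, and simple connectedness follows. This is precisely the mechanism that rules out the torus-type counterexamples above: a nontrivial element of $\ker(\exp^{-})$ would be contracted to $0$ while its image stays at $e$, contradicting local injectivity of $\exp$. To repair your proof you would need to add this contraction argument; without it the statement you are invoking is not true.
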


\begin{proof}
We will just show this fact for $G^{-}$, because the proof for $G^{+}$ is
analogous. Denote the exponential map $\exp:\mathfrak{g}\rightarrow G$
restricted to $\mathfrak{g}^{-}$ by $\exp^{-}$, which is the exponential map
of $G^{-}$. Since $G^{-}$ is nilpotent and connected, $\exp^{-}:\mathfrak{g}%
^{-}\rightarrow G^{-}$ is a covering map, hence it is surjective, continuous
and open. Hence, if we show that $\exp^{-}$ is injective, it will be a
homeomorphism. Therefore simple connectedness of $\mathfrak{g}^{-}$ implies
simple connectedness of $G^{-}$.

In order to show that $\exp^{-}$ is injective, let $X,Y\in\mathfrak{g}^{-}$
such that $\exp^{-}X=\exp^{-}Y$. Consider open neighborhoods $V$ and $U$ of
$0\in\mathfrak{g}$ and $e\in G$, respectively, such that $\exp:V\rightarrow U$
is a diffeomorphism. Remark \ref{Obs12} implies that for all $Z\in
\mathfrak{g}^{-}$ it holds
\[
|(df_{0})_{e}^{n}(Z)|\leq c\mu^{n}|Z|\text{ for all }n\in\mathbb{N},
\]
for some $c\geq1$ and $\mu\in(0,1)$. Consider $n$ large enough such that
$(df_{0})_{e}^{n}(Y),\,\allowbreak(df_{0})_{e}^{n}(Y)\in V$. Hence,
\[
\exp^{-}((df_{0})_{e}^{n}(X))=f_{0}^{n}(\exp^{-}X)=f_{0}^{n}(\exp^{-}%
Y)=\exp^{-}((df_{0})_{e}^{n}(Y)).
\]
The injectivity of $\exp|_{V}$ implies that $\exp^{-}|_{V}=\exp|_{V\cap
\mathfrak{g}^{-}}$ is injective, and hence $(df_{0})_{e}^{n}(X)=(df_{0}%
)_{e}^{n}(Y)$. Since $(df_{0})_{e}|_{\mathfrak{g}^{-}}$ is an isomorphism, we
obtain $X=Y$.
\end{proof}

\begin{remark}
The maps $\exp^{\pm}=\exp|_{G^{\pm}}:\mathfrak{g}^{\pm}\rightarrow G^{\pm}$
are diffeomorphisms, because $G^{\pm}$ are connected, simply connected and
nilpotent (see Knapp \cite[Theorem 1.127]{Knapp} or San Martin \cite[Theorem
10.8]{SM16}).
\end{remark}

The following theorem presents the announced sufficient condition for the
closedness of $G^{-}$ (and $G^{+}$).

\begin{theorem}
\label{theo23} If $G$ is simply connected, the stable and the unstable
subgroup $G^{-}$ and $G^{+}$, resp., of the uncontrolled system are closed.
\end{theorem}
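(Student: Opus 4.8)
The plan is to exploit the fact that $G$ is simply connected and that the Lie algebra decomposes as $\mathfrak{g}=\mathfrak{g}^{-}\oplus\mathfrak{g}^{+,0}$, where $\mathfrak{g}^{+,0}=\mathfrak{g}^{+}\oplus\mathfrak{g}^{0}$ is itself a Lie subalgebra by Proposition \ref{dynliealg}. The strategy is to show that the connected subgroup $G^{+,0}$ with Lie algebra $\mathfrak{g}^{+,0}$ is closed, and then deduce closedness of $G^{-}$ by a symmetric argument or by using the global structure of the splitting. The natural tool for a simply connected $G$ is the global decomposition into a semidirect (or at least a diffeomorphic) product: one wants a diffeomorphism $G^{-}\times G^{+,0}\to G$, $(g,h)\mapsto gh$, which forces $G^{-}$ to be the preimage of a point under a continuous map (the second-factor projection), hence closed.

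First I would invoke the preceding Remark, that $\exp^{-}:\mathfrak{g}^{-}\to G^{-}$ is a diffeomorphism, and establish the analogous statement for $G^{+,0}$ using simple connectedness of $G$: since $\mathfrak{g}^{+,0}$ is a subalgebra of a simply connected $G$'s Lie algebra, the corresponding connected subgroup $G^{+,0}$ is closed and simply connected — this is the standard fact that in a simply connected Lie group every subgroup corresponding to an ideal is closed, and more generally one reduces to the case where $\mathfrak{g}^{+,0}$ is handled via the exponential coordinates. Here the key point is that $\mathfrak{g}^{-}$ is an ideal in $\mathfrak{g}$: indeed $[\mathfrak{g}^{+,0},\mathfrak{g}^{-}]\subset\mathfrak{g}^{-}$ and $[\mathfrak{g}^{-},\mathfrak{g}^{-}]\subset\mathfrak{g}^{-}$ follow from Proposition \ref{dynliealg}, because a product of eigenvalues, at least one of which has modulus $<1$ and the other modulus $\le 1$... wait — that gives modulus $<1$ only when the other factor has modulus $\le 1$; if the other factor is in $\mathfrak{g}^{+}$ the product could have modulus $>1$. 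So $\mathfrak{g}^{-}$ need not be an ideal. Instead, $\mathfrak{g}^{+,0}$ is the one to check: $[\mathfrak{g}^{+,0},\mathfrak{g}^{-}]\subset\mathfrak{g}^{-}\oplus\mathfrak{g}^{+,0}$ trivially, and one checks directly that $\mathfrak{g}^{-}$ is normalized by $G^{+,0}$ only if it is an ideal, which it is not in general. Therefore the correct route is: $\mathfrak{g}^{+,0}$ is a subalgebra, and in a simply connected $G$ I would use that $G$ is diffeomorphic (as a manifold) to the product of the subgroups coming from a splitting $\mathfrak{g}=\mathfrak{a}\oplus\mathfrak{b}$ into subalgebras, provided one of them is an ideal — so I must identify which summand is an ideal.

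The cleaner approach, which I would actually carry out, is to pass to the nilradical or to use induction on $\dim G$ via a normal subgroup. Since $G$ is simply connected, write $\mathfrak{n}$ for an ideal of $\mathfrak{g}$ contained in, say, the $(df_0)_e$-invariant part, and use that $(df_0)_e$ preserves the derived series and lower central series, so the stable/center-stable decompositions are compatible with these characteristic ideals. Concretely: the commutator ideal $[\mathfrak{g},\mathfrak{g}]$ is $(df_0)_e$-invariant (since $(df_0)_e$ is a Lie algebra automorphism), hence splits as $([\mathfrak{g},\mathfrak{g}]\cap\mathfrak{g}^{-})\oplus([\mathfrak{g},\mathfrak{g}]\cap\mathfrak{g}^{+,0})$. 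The corresponding connected subgroup $[G,G]$ is closed (commutator subgroup of a simply connected group is closed), is simply connected, and by induction its stable subgroup is closed; then one analyzes the abelian quotient $G/[G,G]\cong\mathbb{R}^k$, on which the induced map is linear and the stable subgroup is just a linear subspace, manifestly closed; finally combine via an extension argument. The main obstacle I anticipate is precisely this gluing step: showing that closedness of the stable subgroup in the normal subgroup $N=[G,G]$ and in the quotient $G/N$ implies closedness in $G$. This requires checking that $G^{-}N$ is closed (it is the preimage of a closed subspace under $G\to G/N$) and that $G^{-}$ is closed inside $G^{-}N$, the latter using that $G^{-}\cap N = (N)^{-}$ is closed by induction together with properness of $\exp^{-}:\mathfrak{g}^{-}\to G^{-}$, which is a diffeomorphism; one then argues that a subgroup which is a closed submanifold in the quotient and has closed intersection with the fiber is closed. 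I expect this last submanifold/properness argument, and correctly setting up the induction base and the $(df_0)_e$-equivariance of all the quotients, to be where the real work lies; the Lie-theoretic inputs (nilradical invariance, simple connectedness passing to subgroups and quotients) are standard and citable from Knapp or San Martin.
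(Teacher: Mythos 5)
Your proposal is not a complete proof, and its central reduction has a structural flaw. You propose an induction on $\dim G$ via the derived subgroup $N=[G,G]$, reducing to the abelian quotient $G/[G,G]\cong\mathbb{R}^{k}$. But this reduction produces nothing when $\mathfrak{g}$ is perfect, i.e.\ $[\mathfrak{g},\mathfrak{g}]=\mathfrak{g}$ (for instance when $G$ is semisimple): then $N=G$, the quotient is trivial, and the induction does not decrease dimension. This is precisely the case the theorem is meant to cover, since for solvable simply connected groups closedness of all connected subgroups is already known (San Martin \cite[Proposi\c{c}\~ao 10.6]{SM16}) and is noted in the paper as handling that class. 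Beyond this, the gluing step you flag as ``where the real work lies'' is indeed not routine and is left undone: from $h_n\to g$ with $h_n\in G^{-}$ and $\pi(g)\in\pi(G^{-})$ closed you only get $g=hn$ with $n\in N$ and $\pi(h^{-1}h_n)\to eN$, which does not place $h^{-1}h_n$ in $N$, so closedness of $G^{-}\cap N$ cannot be applied directly; you would also need to justify the identifications $G^{-}\cap N=N^{-}$ and $\pi(G^{-})=(G/N)^{-}$, neither of which is immediate. Your correct observation that neither $\mathfrak{g}^{-}$ nor $\mathfrak{g}^{+,0}$ need be an ideal (only products of eigenvalues of modulus $\le 1$ with modulus $\le 1$ stay in $\mathfrak{g}^{-,0}$, etc.) is exactly what blocks your first, semidirect-product idea, but the pivot to the derived series does not rescue the argument.

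The paper's proof is much more direct and avoids any normal subgroup or induction. It considers the closure $\overline{G^{-}}$, a connected Lie subgroup whose Lie algebra $\overline{\mathfrak{g}^{-}}$ contains $\mathfrak{g}^{-}$, and shows that $\exp(Z)\notin\overline{G^{-}}$ for every $Z\in\mathfrak{g}^{+,0}\setminus\{0\}$: any sequence $g_n=\exp(X_n)$ with $X_n\in\mathfrak{g}^{-}$ (using surjectivity of $\exp$ on the connected, simply connected, nilpotent group $G^{-}$) converging to $\exp(Z)$ forces $X_n\to Z$, hence $Z\in\mathfrak{g}^{-}\cap\mathfrak{g}^{+,0}=\{0\}$, a contradiction. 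If $G^{-}$ were not closed, $\overline{\mathfrak{g}^{-}}$ would contain an element with nonzero $\mathfrak{g}^{+,0}$-component, contradicting the above. If you want to salvage your own route, you would at minimum have to replace $[G,G]$ by an ideal that genuinely reduces the problem in the perfect case and then supply the fiber-and-quotient closedness lemma in a correct form; as written, the argument does not go through.
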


\begin{proof}
We will just show this fact for $G^{-}$, because the proof for $G^{+}$ is
analogous. Since $G$ is simply connected and $G^{-}$ is connected, $G^{-}$
cannot be dense in $G$, hence the open set $G\setminus\overline{G^{-}}$ is
non-empty. The continuity of $\exp:\mathfrak{g}\rightarrow G$ implies that
$\exp^{-1}\left(  G\setminus\overline{G^{-}}\right)  $ is a non-empty open set
of $\mathfrak{g}$. In order to show that $\mathfrak{g}^{+,0}\setminus
\{0\}\subset\exp^{-1}\left(  G\setminus\overline{G^{-}}\right)  $, consider
$Z\in\mathfrak{g}^{+,0}\setminus\{0\}$ and suppose that $\exp(Z)\in
\overline{G^{-}}$. Then there is a sequence $(g_{n})\subset G^{-}$ such that
$g_{n}\rightarrow\exp(Z)$. Since $\exp^{-}:=\exp|_{\mathfrak{g}^{-}%
}:\mathfrak{g}^{-}\rightarrow G^{-}$ is surjective, there is a sequence
$(X_{n})\subset\mathfrak{g}^{-}$ with $g_{n}=\exp(X_{n})$. Hence $\exp
(X_{n})\rightarrow\exp(Z)$, so $X_{n}\rightarrow Z$. The closedness of
$\mathfrak{g}^{-}$ implies that $Z\in\mathfrak{g}^{-}\cap\mathfrak{g}^{+,0}$,
that is, $Z=0$ which is a contradiction.

Assume that $G^{-}$ is not closed and denote by $\overline{\mathfrak{g}^{-}}$
the Lie algebra of $\overline{G^{-}}$. Then $\mathfrak{g}^{-}$ is properly
contained in $\overline{\mathfrak{g}^{-}}$. Consider $X\in\overline
{\mathfrak{g}^{-}}\setminus\mathfrak{g}^{-}$. Then there are $Y\in
\mathfrak{g}^{+}$, $Z\in\mathfrak{g}^{0}$ and $X^{-}\in\mathfrak{g}^{-}$ such
that
\[
X=Y+Z+X^{-},
\]
where $Y$ or $Z$ is non-zero. Hence $Y+Z=X-X^{-}\in\mathfrak{g}^{+,0}%
\cap\overline{\mathfrak{g}^{-}}$. But it implies that $\exp^{-1}\left(
G\setminus\overline{G^{-}}\right)  \cap\overline{\mathfrak{g}^{-}}%
\neq\emptyset$, which is a contradiction, because $\exp^{-1}\left(
G\setminus\overline{G^{-}}\right)  $ and $\overline{\mathfrak{g}^{-}}$ are disjoint.
\end{proof}

The following example shows that the subgroups $G^{+}$ and $G^{-}$ may not be
closed, if the group $G$ is not simply connected.

\begin{example}
Consider the following automorphism on the torus $\mathbb{T}^{2}%
=\mathbb{R}^{2}/\mathbb{Z}^{2}$ which, naturally, is not simply connected:
\[
f_{0}((x,y)+\mathbb{Z}^{2})=(2x+y,x+y)+\mathbb{Z}^{2}.
\]
Then $(df_{0})_{e}:\mathbb{R}^{2}\rightarrow\mathbb{R}^{2}$ is given by
\[
(df_{0})_{e}=\left[
\begin{array}
[c]{cc}%
2 & 1\\
1 & 1
\end{array}
\right]  .
\]
Hence, the eigenvalues of $(df_{0})_{e}$ are $\lambda_{1}=\frac{1}{2}%
(3+\sqrt{5})>1$ and $\lambda_{2}=\frac{1}{2}(3-\sqrt{5})\in(0,1)$. Moreover,
we have
\[
\mathfrak{g}^{+}=\text{span}\left\{  \left(  \frac{1}{2}(3+\sqrt{5}),1\right)
\right\}  ,\quad\mathfrak{g}^{-}=\text{span}\left\{  \left(  \frac{1}%
{2}(3-\sqrt{5}),1\right)  \right\}  .
\]
Therefore
\begin{align*}
G^{+}  &  =\left\{  \exp\left(  t\left(  \frac{1}{2}(3+\sqrt{5}),1\right)
\right)  ;\ t\in\mathbb{R}\right\}  ,\\
G^{-}  &  =\left\{  \exp\left(  t\left(  \frac{1}{2}(3-\sqrt{5}),1\right)
\right)  ;\ t\in\mathbb{R}\right\}  .
\end{align*}
Both $G^{+}$ and $G^{-}$ are irrational flows on $\mathbb{T}^{2}$ which are
dense in $\mathbb{T}^{2}$, hence they are not closed.
\end{example}

\textbf{Acknowledgement.} We are grateful to an anonymous reviewer whose
comments were very helpful for the revision of the paper.

\end{document}